\algnewcommand\And{\textbf{and}}
\newcolumntype{C}[1]{>{\centering\arraybackslash}m{#1}}
\newcommand{\E}{\mathbb{E}}
\newcommand{\NN}{\mathbb{Z}_{>0}}
\newcommand{\RR}{\mathbb{R}}
\renewcommand{\SS}{\mathbb{S}^{d-1}}
\newcommand{\cK}{\mathcal{K}}
\newcommand{\PP}{\mathbb{P}}
\newcommand{\MH}{\mathrm{MH}}
\newcommand{\gq}{q_\cK}
\newcommand{\gP}{P_\cK}
\newcommand{\K}{A}
\newcommand{\Kc}{A^c}
\newcommand{\zv}{Z}
\newcommand{\rv}{R}
\newcommand{\cA}{\mathcal{A}}
\newtheorem{assumption}{Assumption}
\newtheorem{theorem}{Theorem}[section]
\newtheorem{proposition}[theorem]{Proposition}
\newtheorem{lemma}[theorem]{Lemma}
\begin{document}

\title{A Metropolis-class sampler for targets with non-convex support}

\author{John Moriarty\footnote{School of Mathematics, Queen Mary University of London, London E1 4NS. Email:\texttt{j.moriarty@qmul.ac.uk}}
\and
Jure Vogrinc\footnote{Department of Statistics, University of Warwick, Coventry, CV4 7AL. Email: \texttt{jure.vogrinc@warwick.ac.uk}}
\and
Alessandro Zocca\footnote{Department of Mathematics, Vrije Universiteit Amsterdam, 1081 HV Amsterdam. Email: \texttt{a.zocca@vu.nl}}}

\date{\today}

\maketitle
 
\begin{abstract}
We aim to improve upon the exploration of the general-purpose random walk Metropolis algorithm when the target has non-convex support $\K \subset \mathbb{R}^d$, by reusing proposals in $A^c$ which would otherwise be rejected. The algorithm is Metropolis-class and under standard conditions the chain satisfies a strong law of large numbers and central limit theorem. Theoretical and numerical evidence of improved performance relative to random walk Metropolis are provided. Issues of implementation are discussed and numerical examples, including applications to global optimisation and rare event sampling, are presented.\\

\textbf{Keywords:} Markov Chain Monte Carlo; Metropolis-Hastings algorithm; multimodal target distribution; multistart method; global optimisation.\\

\textbf{Mathematics Subject Classification (2010):} 65C05; 62F12; 60F05; 60J05; 65C40; 90C26; 65K05.
\end{abstract}

\section{Introduction}\label{sec:intro}

A key challenge for Markov chain Monte Carlo (MCMC) algorithms is the balance between global ``exploration'' and local ``exploitation''. In this paper we present the \textit{skipping sampler}, a general-purpose and easily implemented Metropolis-class algorithm which is capable of improving exploration of targets $\pi$ with nontrivial support $\K$, by reusing proposals lying outside $\K$. For this to be useful, we make the following standing assumption:
\begin{assumption}\label{ass:pi}
$\pi$ is a probability density function on $\RR^d$ whose support $$A= \textrm{supp}(\pi) := \{x \in \RR^d: \pi(x)>0\}$$ satisfies $Leb(A^c) > 0$, where $A^c$ is the complement of $A$ and $Leb$ denotes Lebesgue measure on $\RR^d$.
\end{assumption}
\noindent Such targets can arise for example in sampling from the superlevel sets of a density in the hybrid slice sampler \cite{neal2003}, or when sampling from rare events.

Proposals in $\K^c$ would be automatically rejected by standard algorithms such as random walk Metropolis (RWM), which exploits only local proposals for the next state of the chain. If a proposal lies in $\K^c$, the skipping sampler uses this information by attempting to cross $\K^c$ in a sequence of linear steps, much as a flat stone can jump repeatedly across the surface of water, and offer a relevant proposal. Since this can be seen as a tunnelling effect through the zero-mass region $\K^c$, it is advantageous when $\K$ is non-convex and, in particular, disconnected. The resulting Markov chain satisfies a strong law of large numbers and central limit theorem under essentially the same conditions as for RWM, to which we provide theoretical and numerical performance comparisons. 

To accelerate global exploration of the state space in MCMC algorithms, several approaches have by now been developed including tempering, Hamiltonian Monte Carlo and piecewise deterministic methods (see \cite{robert2018accelerating} for a recent review). However these methods are best suited to target densities with connected support, since the chain cannot cross regions where the target has zero density. A disconnected support would thus imply reducibility of the chain and its failure to converge to the target.

While RWM can be applied to targets with regions of zero density, its balance between exploration and exploitation can be problematic. If any state in $\K^c$ is proposed it is discarded and the chain does not progress. When $\K$ is non-convex, in particular, examples may be constructed where exploration is slow even when RWM is well tuned, making the chain sensitive to its initial state. This is illustrated in Figure \ref{fig:trajectories}, where red dots show the trace of a tuned RWM applied to a target with non-convex support, with four different initial states of the chain (the blue traces illustrate the increased exploration achieved by the skipping sampler).
One solution is to use knowledge of the target to design a more advanced proposal, such as those reviewed in \cite{robert2018accelerating}. However this approach is unavailable if the target density is unknown, or is known but insufficiently regular. In this case, a general-purpose method is instead required. 

\begin{figure*}[!ht]
\centering
\subfloat[ $X_0=(0,3.5)$]{\includegraphics[width=0.485\textwidth]{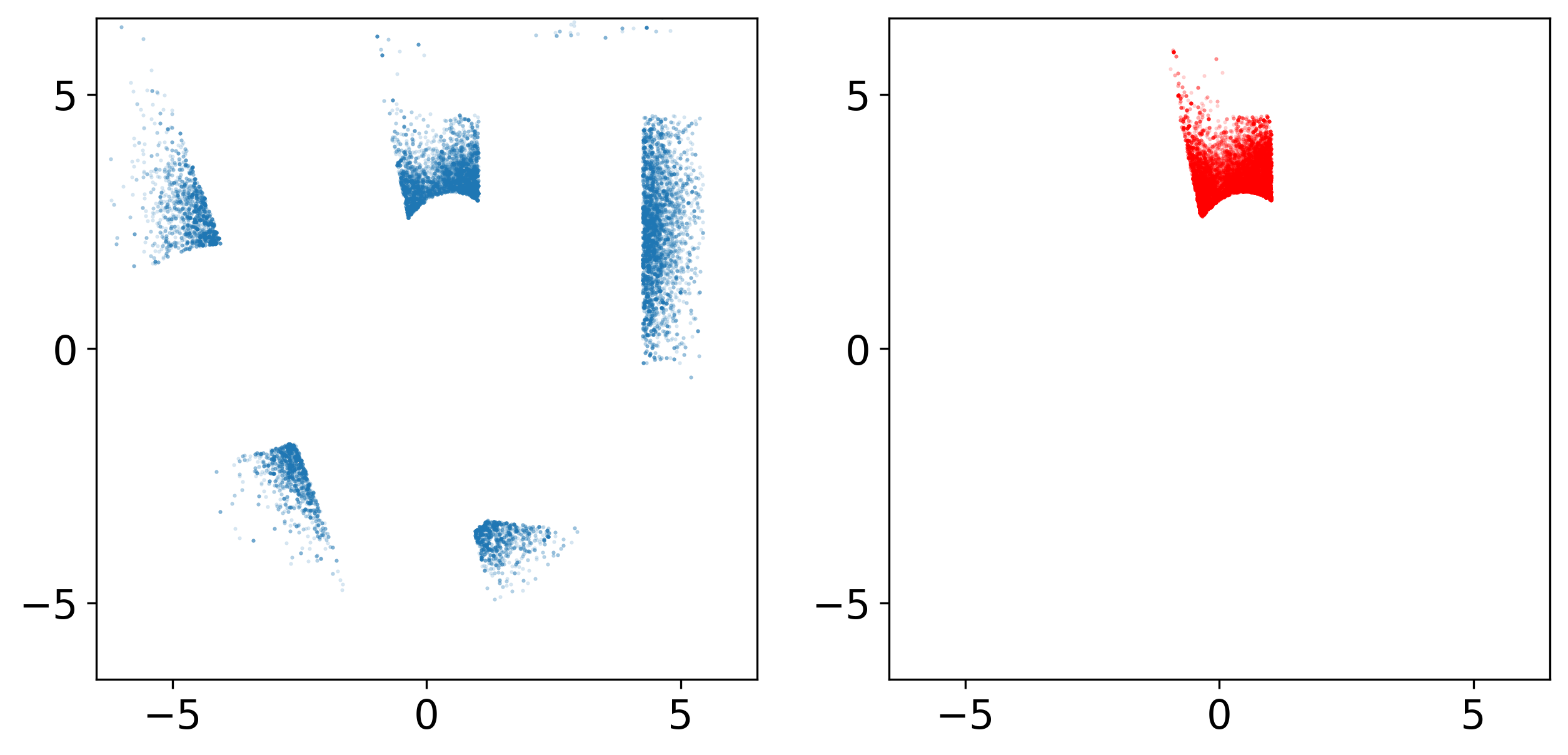}}
\hspace{0.22cm}
\subfloat[ $X_0=(2,-4)$]{\includegraphics[width=0.485\textwidth]{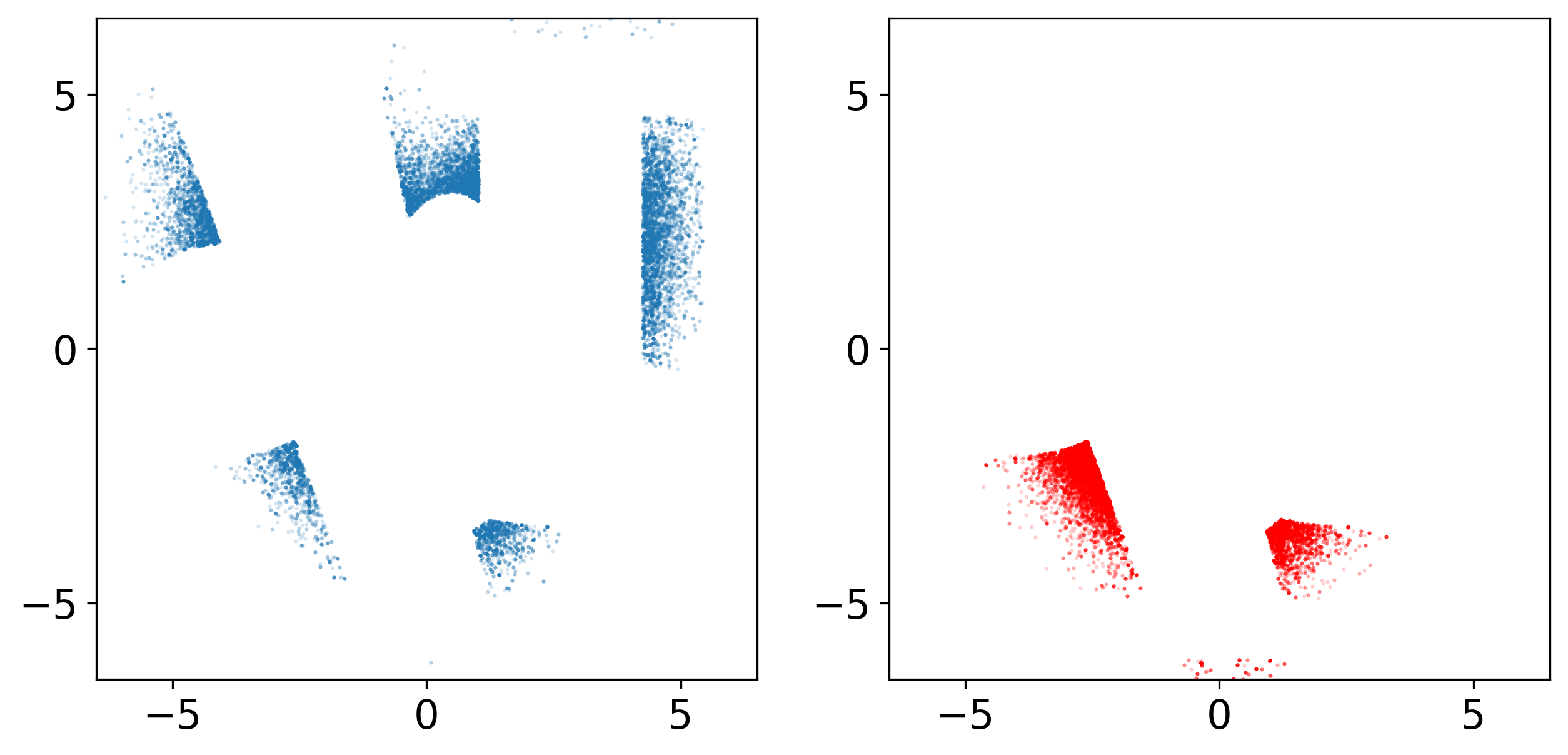}}
\\
\subfloat[ $X_0=(-4,-4)$]{\includegraphics[width=0.485\textwidth]{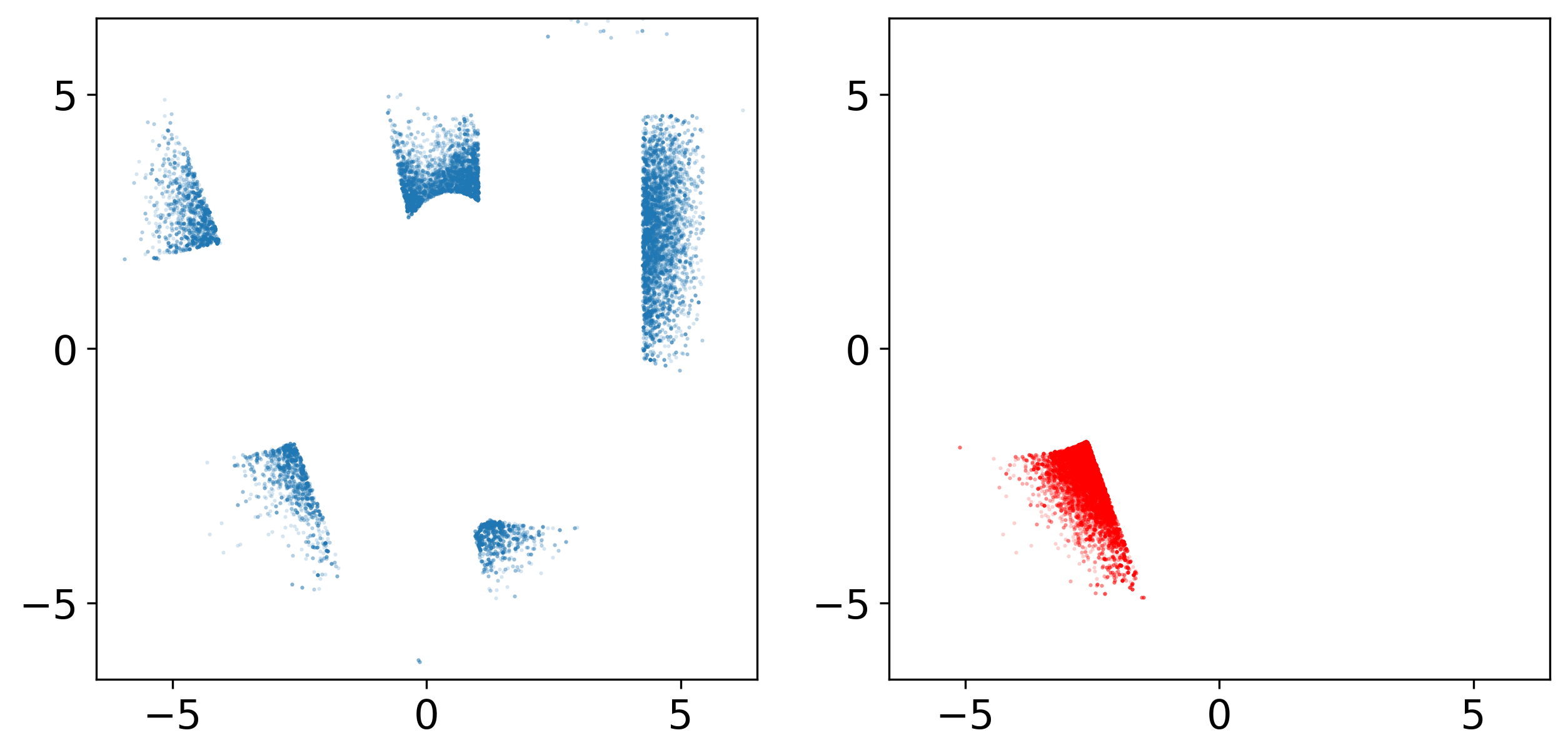}}
\hspace{0.22cm}
\subfloat[ $X_0=(-5,2)$]{\includegraphics[width=0.485\textwidth]{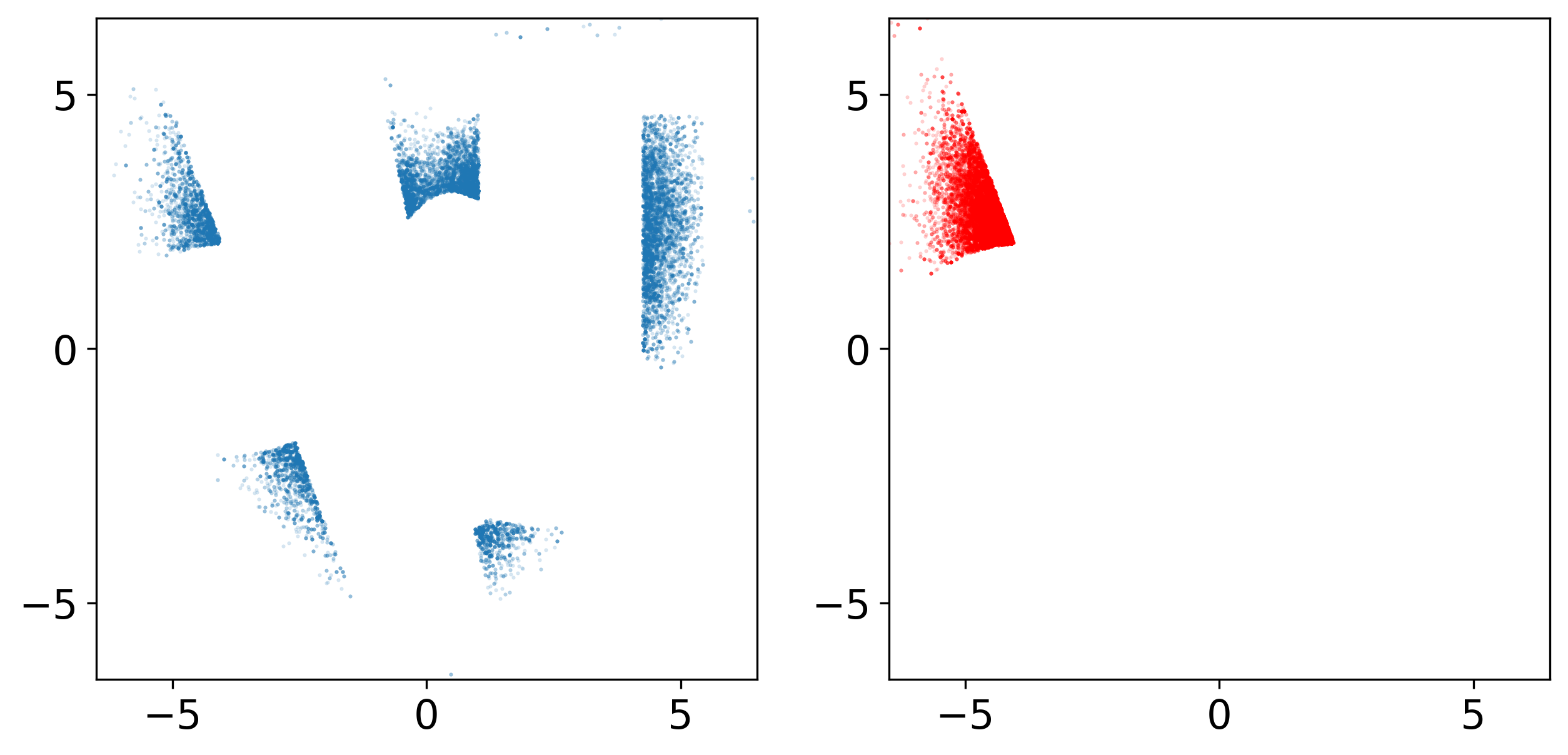}}
\caption{Traces of the proposed skipping sampler (blue) and RWM algorithm (red) when the target has disconnected support. Both samplers are started at the same initial point $X_0$ and use the same underlying Gaussian proposal, whose standard deviation is tuned for a RWM acceptance ratio of 25\%. The RWM typically localises around its initial state. 
}\label{fig:trajectories}
\end{figure*}

Theorem \ref{thm:CLT} establishes that the performance of the skipping sampler is at least as good as that of RWM according to the Peskun ordering. However the strengths of the proposed method lie primarily in applications to difficult low-dimensional problems. Conversely, in high dimensional problems the method generally offers similar performance to RWM. The aim of this paper is to present the method and illustrate its benefits via numerical examples, rather than to study any particular application exhaustively. 

Although it is not random walk-based, the skipping sampler is Metropolis class. The symmetry of the skipping proposal can be seen intuitively, provided that the direction of the first proposal is chosen symmetrically and the sequence of jump lengths has the same distribution when reversed. Thus although the proposal density typically does not have a convenient closed form, it need not be evaluated in order to access the Metropolis acceptance probabilities. Another advantage is that the sampler is general-purpose, in the sense that no knowledge is required of the target density beyond the ability to evaluate it pointwise. In particular, it is not necessary to know the target's support a priori.

Beyond the context of random sampling, our work has applications to probabilistic methods for deterministic non-convex optimisation such as multistart~\cite{Jain1993,Marti2003} and basin-hopping~\cite{Leary2000,Wales1997}. These methods combine deterministic local search, such as given by a gradient method, with random perturbations or re-initialisations which may be performed using the skipping sampler to improve exploration. Section \ref{sec:num} provides numerical examples of these applications.

\subsection{Related work}\label{sec:bgrw}
Many methods for accelerating the exploration of MCMC algorithms use prior knowledge of the target.
For example, known mode locations may be used to design global moves for the sampler \cite{andricioaei2001,Pompe2020,lan2014,sminchisescu2007,sminchisescu2003mode,tjelmeland2001}, or moves may be guided by the known derivatives of a differentiable target density \cite{lan2014,ram2018}.

Some exceptions are methods that generate multiple proposals, such as Multipoint MCMC \cite{qin2001} and Multiple-try Metropolis \cite{liu2000} which, like the skipping sampler, do not require additional information about the target. A fixed number of potentially correlated trial points are generated and one is selected at random, using a weight function which may be chosen to encourage exploration. Its random-grid implementation, in particular, has similarities with the skipping sampler. However, instead of fixing the number of draws, our proposal attempts to continue projecting further sequentially until it reaches $\K$. Another advantage of our method is that it is Metropolis class, which simplifies both implementation and theoretical analysis.

During the review process our attention was brought to the very interesting sequential proposals of \cite{Park2020}, which also introduces a Metropolis-class sampler that modifies the proposal sequentially. In the wider class of algorithms introduced there, it is possible to recognise methods close in spirit to the skipping sampler. When skipping is applied to the hybrid slice sampler as in Section~\ref{sec:slice}, for example, the resulting algorithm is a particular instance of the sequential proposal. While in~\cite{Park2020} the authors are motivated by the efficient implementation of Hamiltonian Monte Carlo, our own motivation is the efficient sampling of rare events. Together, these studies are  suggestive of further potential to use sequences of proposals to accelerate MCMC methods in a range of situations, for instance within the framework introduced in \cite{Andrieu20}.

Like the hit-and-run sampler \cite{Smith84} and related algorithms (see Section 6.3 of \cite{GilksRoberts1996}), the skipping sampler splits a Markovian transition into the random generation of a direction followed by a move in that direction. When the target conditioned on any line in the space is available in closed form, the hit-and-run algorithm is of course preferable, for the reasons provided in \cite{RudolfUllrich2018}. Otherwise (which is more typical in applications), the skipping sampler offers a simple alternative and has the potential to increase exploration in the case of a non-convex support.

While also designed for targets with non-convex support, the \textit{ghost sampler} introduced by the present authors in~\cite{mvz2018} is not general-purpose since it uses knowledge of the geometry of the set $\K$, assuming it is polyhedral. 

The rest of the paper is structured as follows. We introduce the skipping sampler in Section~\ref{sec:gs} and state our main results in Section~\ref{sec:mainthms}. Implementation and extensions are discussed in Section \ref{sec:implementation}. Numerical applications to slice sampling and rare event sampling are given in Section~\ref{sec:num}, together with an application to global optimisation. Section~\ref{sec:proof} is devoted to the proof of the main results. 

\section{Skipping sampler}\label{sec:gs}
In this section we introduce the skipping sampler on $\RR^d$, which is a modification of the RWM algorithm \cite{metropolis}. It is Metropolis-class although, unlike RWM, does not perform a random walk.
\begin{assumption}\label{ass:qSRW}
Let $q: \RR^d\to \RR_+\cup \{0\}$ be a symmetric ($q(x)=q(-x)$) continuous probability density function with $q(\bm{0})>0$. We refer to $q$ as the \textit{underlying} proposal density.
\end{assumption}

Recall that given the state $X_n$ of the chain, the RWM proposes a state $Y_{n+1}$ sampled from the density $y \mapsto q(y - X_n)$ and accepts it as the next state $X_{n+1}$ with probability
\begin{equation}\label{eq:alpha}
	\alpha(X_n,Y_{n+1}) \, \, \,:= \, \,\,
	\begin{cases}
		\min \left \{ 1, \frac{\pi(Y_{n+1})}{\pi(X_n)} \right \}
		& \text{ if } \pi(X_n) 
		\neq 0,\\
		1 & \text{ otherwise,}
	\end{cases}
\end{equation}
else it is rejected by setting $X_{n+1}=X_{n}$. Here $\pi$ is the target density, although we do not take care to distinguish between $\pi$ and the corresponding distribution as it will not cause confusion. For convenience we use the common shorthand $\MH(\pi,q)$ (after the more general Metropolis-Hastings algorithm, see \cite{hastings}) to refer to the Metropolis-class algorithm with target $\pi$ and proposal $q$. 

Algorithm \ref{alg:skip} presents the skipping sampler, which aims to endow RWM with an improved ability to cross regions in which the target has zero density. Beginning with a RWM proposal $Y_{n+1}$, it continues jumping in a linear trajectory and accepts or rejects the first state of nonzero target density to be encountered. Thus any RWM proposal $Y_{n+1} \in \Kc$, which would be rejected by $\MH(\pi,q)$, is instead reused by adding jumps of random size in the direction $Y_{n+1}-X_n$ until either $\K$ is entered, or skipping is halted.

\begin{algorithm}[!h]
\SetAlgoLined
\SetKwInOut{Input}{Input}\SetKwInOut{Output}{Output}
\Input{The $n$-th sample $X_n\in \RR^d$
}
\BlankLine
\DontPrintSemicolon
Set $X:=X_n$ and $Z_0=X$;\\
Generate the initial proposal $Y$ distributed according to the density $u \mapsto q(u-X)$;\\
Calculate the direction 
$$\Phi \quad=\quad \frac{(Y - X)}{\|Y - X\|}\,;$$\\
Generate an independent random halting index $K \sim \cK$;\\
Set $k=1$ and $Z_1:=Y$;\\
\While{$Z_k \in \Kc$ \textbf{\textup{and}} $k< K$}{
	Generate an independent distance increment $R$ distributed as $\|Y-X\|$ given $\Phi$;\\
	Set $Z_{k+1}=Z_k+ \Phi R$;\\
	Increase $k$ by one;\\
}
Set $Z:=Z_k$;\\
Evaluate the acceptance probability:
\begin{equation}\label{eq:ap2}
	\alpha(X,Z)\quad=\quad
	\begin{cases}
	\min\left(1, \frac{\pi(Z)}{\pi(X)} \right) & \text{ if } \pi(X) \neq 0, \\
	1, & \text{ otherwise; }
	\end{cases}
\end{equation}
Generate a uniform random variable $U$ on $(0,1)$;\\
\uIf{$U \leq \alpha(X,Z)$}{$X_{n+1}=Z$;}
\Else{$X_{n+1}=X$;}
\KwRet{$X_{n+1}$}.\;
\caption{Skipping sampler ($n$-th iteration)}\label{alg:skip}
\end{algorithm}

Algorithm \ref{alg:skip} can be interpreted as follows. The {\it halting index} $K$ is an independent random variable with distribution $\cK$ on $\NN\cup\{\infty\}$. If $K=1$ then $Y$, the usual RWM proposal, is taken as the proposal. However if $K>1$, the proposal is constructed using the \textit{skipping chain} $\{\zv_k\}_{k \geq 0}$ on $\RR^d$ defined by  $\zv_0:=X$, with $X=X_n$ being the current state of the chain, and the update rule%
\begin{equation}\label{eq:skippingchain}
	\zv_{k+1}:=\zv_k + \Phi \rv_{k+1}, \quad k \geq 0\,,
\end{equation}
where $\|\cdot\|$ denotes the Euclidean norm, $\Phi = (Y-X)/\|Y-X\|$, $R_1 =\|Y-X\|$, and the distance increments $\{\rv_k\}_{k \geq 2}$ are independent draws from the distribution of the radial part $\|Y-X\|$ conditional on the angular part $\Phi$.

\noindent Let $T_{\K}$ be the first entry time of the skipping chain into $\K$:
\begin{align}\label{eq:st}
	T_{\K}:=\min\{k \geq 1 ~:~ \zv_k  \in \K \},
\end{align}
with $\min \emptyset := \infty$. Writing $T_\K \wedge K$ for the smaller of the two indices $T_{\K}$ and $K$, we also require:
\begin{assumption}\label{ass:FiniteSkipping}
The support $\K=\mathrm{supp}(\pi)$ and distribution $\cK$ are such that 
$\E[T_{\K}\wedge K]~<~\infty\,.$
\end{assumption}

Relevant considerations for the choice of $\mathcal{K}$ and $q$ are discussed in Section \ref{sec:hi}. Note that almost surely we have both $Y \neq x$ (since $q$ is a density) and $T_{\K}\wedge K < \infty$ (Assumption~\ref{ass:FiniteSkipping}), so the skipping proposal $Z:=Z_{T_{\K}\wedge K}$ output by Algorithm \ref{alg:skip} is well defined.

\begin{proposition}\label{prop:m}
The following statements hold:
\begin{enumerate}[(i)]
\item Algorithm \ref{alg:skip} is a symmetric Metropolis-class algorithm on the domain $\K$. That is, there exists a transition density $q_\cK$ (which depends on the halting index distribution $\cK$) 
satisfying $q_\cK(x,z)=q_\cK(z,x)$ for all $x,z\in\K$, such that Algorithm \ref{alg:skip} is MH($\pi,q_\cK$).
\item The inequality $q_\mathcal{K}(x,z) \geq q(z-x)$ holds for every $x,z \in \K$.
\end{enumerate}
\end{proposition}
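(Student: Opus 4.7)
The plan is to identify the proposal kernel $\gq(x,\cdot)$ on $\K$ as an explicit series over the first-entry step of the skipping chain, prove symmetry via a path-reversal bijection that exploits the symmetry of $q$, and read part (ii) off from nonnegativity.

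First, by summing over the exit step $k = T_\K \wedge K$ of Algorithm~\ref{alg:skip} and using independence of $K$ and the skipping chain, one has for $x,z \in \K$
\begin{equation*}
\gq(x, z) \;=\; \sum_{k=1}^{\infty} \PP(K \geq k)\, \mu_k(x, z),
\end{equation*}
where $\mu_k(x, z)$ is the $\RR^d$-density at $z$ of the event $\{Z_k = z,\, Z_1, \ldots, Z_{k-1} \in \Kc\}$ under the dynamics~\eqref{eq:skippingchain}. Set $r := |z - x|$ and $\varphi := (z-x)/r$. Conditioning on $\Phi = \varphi$, applying the polar factorisation $q(r\varphi)\, r^{d-1} = \sq(\varphi)\,\mq(r\mid\varphi)$, and changing to the partial sums $t_j := R_1 + \cdots + R_j$, I would derive
\begin{equation*}
\mu_k(x,z) \;=\; \frac{\sq(\varphi)}{r^{d-1}} \int_{D_k(x, \varphi, r)} \prod_{j=1}^{k} \mq(t_j - t_{j-1} \mid \varphi)\, dt_1 \cdots dt_{k-1},
\end{equation*}
with $t_0 := 0$, $t_k := r$, and $D_k(x,\varphi,r) := \{0 < t_1 < \cdots < t_{k-1} < r : x + t_j\varphi \in \Kc \text{ for all } j < k\}$; the $k=1$ integral equals $1$ and $\mu_1(x,z) = q(z-x)$.

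For part (i), the key step is the substitution $s_j := r - t_{k-j}$, $j = 1, \ldots, k-1$. This has unit Jacobian and is a bijection $D_k(x, \varphi, r) \to D_k(z, -\varphi, r)$: the identity $x + t_j\varphi = z - s_{k-j}\varphi$ matches the $\Kc$-constraints term by term, and the increments $\{t_j - t_{j-1}\}$ are a permutation of $\{s_j - s_{j-1}\}$. By Assumption~\ref{ass:qSRW} the underlying density $q$ is symmetric, so $\sq(-\varphi) = \sq(\varphi)$ and $\mq(\cdot\mid-\varphi) = \mq(\cdot\mid\varphi)$, and the integrand is invariant under the substitution. Hence $\mu_k(x, z) = \mu_k(z, x)$ for every $k$, and summing gives $\gq(x, z) = \gq(z, x)$; combined with~\eqref{eq:ap2} this identifies Algorithm~\ref{alg:skip} as $\MH(\pi, \gq)$. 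For part (ii), $K \geq 1$ almost surely, so $\PP(K \geq 1) = 1$; nonnegativity of the remaining $\mu_k$ terms then yields $\gq(x,z) \geq \mu_1(x,z) = q(z-x)$.

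The main obstacle is bookkeeping in the series representation: writing the polar decomposition with the $r^{d-1}$ Jacobian correctly, and verifying that the $\Kc$-constraints defining $D_k$ line up after the path-reversal substitution. Once that is in hand, the symmetry bijection and the lower bound are both immediate.
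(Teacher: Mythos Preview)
Your proposal is correct and follows essentially the same route as the paper: both arguments write the proposal density on $\K$ as the series $\sum_{k\geq 1}\PP(K\geq k)\,\mu_k(x,z)$ over the first-entry step, establish $\mu_k(x,z)=\mu_k(z,x)$ by a path-reversal argument exploiting the symmetry of $q$, and deduce part~(ii) from the $k=1$ term. The only difference is presentational---where the paper says ``simple manipulations of the Chapman--Kolmogorov integral confirm that it is unchanged when the start and end point, the order of the jumps, and the direction of each jump are all reversed'', you make the substitution $s_j=r-t_{k-j}$ explicit and check the bijection $D_k(x,\varphi,r)\to D_k(z,-\varphi,r)$ directly.
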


\begin{proof}
(i) We now make rigorous the intuitive argument which was provided earlier for the symmetry of the skipping proposal. Conditional on the direction $\Phi$, the skipping chain~\eqref{eq:skippingchain} is one-dimensional. We therefore analyse this one-dimensional chain, before integrating over $\Phi$ to obtain the unconditional transition density.

Consider transitions of the skipping chain~\eqref{eq:skippingchain} between the states $x$ and $z$ in exactly $k \in \NN$ steps. The intermediate states $ z_1, \ldots, z_{k-1}$ satisfy $ z_i \in \K^c$ for $i=1,\ldots,k-1$. The (sub-Markovian) density $z \mapsto \xi_k(x,z)$ of these transitions is given by the Chapman-Kolmogorov equation and the density $\xi(r)$ of the distance increment $R$, which can in $d$-dimensional spherical coordinates be seen to be proportional to $q(r\Phi)r^{d-1}$.
Since the distance increments are i.i.d.~and have symmetric densities ($q(-r\Phi)=q(r\Phi)$), simple manipulations of the Chapman-Kolmogorov integral confirm that it is unchanged when the start and end point, the order of the jumps, and the direction of each jump are all reversed. This establishes that the density $\xi_k$ is symmetric.

Next note that Assumption~\ref{ass:FiniteSkipping} implies the decomposition
\begin{align*}
 \{Z=z\} \quad  &=\quad
\bigcup_{k=1}^\infty\{Z=z,\; T_\K\leq k,\; K= k\} \, \cup~\{Z=z,\; T_\K < \infty,\; K= \infty\} \, \cup~\bigcup_{k=1}^\infty\{Z=z, \; T_\K > k,\;K=k\}\,.
\end{align*}
Hence, $Z$ given $x$ and $\Phi$ has a (sub-Markovian) density
\begin{align*}
\xi_\cK(x,z)\quad = &\quad
\sum_{k=1}^\infty \PP[K = k]\sum_{j\leq k}\xi_j(x,z)1_\K(z) \, +\, \PP[K= \infty]\sum_{j=1}^{\infty}\xi_j(x,z)1_\K(z) \,+\,\sum_{k=1}^\infty  \PP[K = k]\xi_k(x,z)1_{\K^c}(z)
\,.
\end{align*}
When $z\in\K$ the above can be simplified to
\begin{align}\label{eq:qcalk}
\xi_\cK(x,z)
\quad&=\quad
\sum_{k=1}^\infty \xi_k(x,z)\, \PP[K\geq k]\,.
\end{align}

Using $d$-dimensional spherical coordinates, the unconditional transition density is then the product of the density of $\Phi$ with the transition density conditional on $\Phi$:
\thinmuskip=2mu
\medmuskip=2mu
\thickmuskip=2mu
$$
	q_\cK(x,z) \, = \, \|z-x\|^{1-d}\xi_\cK(x,z)\cdot \int_{0}^{\infty}q\left(\frac{z-x}{\|z-x\|}r\right)r^{d-1}dr\,.
$$
\thinmuskip=3mu
\medmuskip=4mu
\thickmuskip=5mu
As the proposal $q$ and the densities $\xi_k$ (for all $k$) are symmetric, so is $\xi_\cK$ and so is the skipping proposal $q_\cK$, whenever $x,z\in \K$.

Since any proposal $Z \in \K^c$ is almost surely rejected if $x\in A$, Algorithm~\ref{alg:skip} is a well defined Metropolis-class algorithm on $A$, i.e. it is equivalent to MH($\pi,q_\cK$) on the domain $A$.

(ii) As noted above, if $K=1$ then Algorithm \ref{alg:skip} reduces to MH($\pi,q$). From \eqref{eq:qcalk} we therefore have
$$
\xi_\cK(x,z) \quad = \quad \sum_{k=1}^\infty \xi_k(x,z)\, \PP[K\geq k] \geq \xi_1(x,z) \cdot 1
$$
which again translates to the desired statement about proposal densities.
\end{proof}

\section{Theoretical results}
\label{sec:mainthms}

For completeness of the discussion below we provide the following definitions, further details of which may be found in \cite{tweedie}. A Markov chain $X_0,X_1\dots$ is $\pi$\textit{-irreducible} if for every $x\in \RR^d$ and every $D \subset \RR^d$ with $\pi(D)>0$ we have 
$$
	\PP_x\left[ \bigcup_{n \in \NN} \{ X_n\in D\} \right]\quad>\quad 0\,.
$$
Further, if $\PP_x\left[ \bigcup_{n \in \NN} \{ X_n\in D\} \right]=1$ for every $x\in B$ and every $D\subset B$ with $\pi(D)>0$ we say that $X_0,X_1,\dots$ is \textit{Harris recurrent on $B$}.
A set $B$ is \textit{absorbing} for a Markov chain with transition kernel $P$ if $P(x,B)=1$ holds for all $x\in B$. Note that an absorbing set $B$ gives rise to a Markov chain evolving on $B$ whose transition kernel is simply $P$ restricted to $B$ (see \cite[Theorem~4.2.4]{tweedie}).

It is clear from \eqref{eq:alpha} that if $x \in \mathrm{supp}(\pi)$ then
$$
    P(x,\mathrm{supp}(\pi)^c)=0,
$$
so that $\mathrm{supp}(\pi)$ is an absorbing set for the Metropolis algorithm with target $\pi$, and is a natural space of realisations of the chain.
In what follows we therefore always consider the chain to evolve on the set $\K$.

Regarding initialisation of the skipping sampler, note from~\eqref{eq:ap2} that if $X_0 \notin \textrm{supp}(\pi)$ in Algorithm~\ref{alg:skip} then $Z$ is automatically accepted. In this case the skipping sampler first enters $\textrm{supp}(\pi)$ at a random step $N$ and, for $0 \leq n \leq N-2$, we have $X_{n+1} = Z_K$ -- that is, the maximum allowed number of skips is performed at each stage. This implies that the skipping procedure is also capable of improving exploration in this initialisation stage. Theorem \ref{thm:ergodic} assumes that $\pi(X_0)>0$, or that initialisation has already been performed. 
We have

\begin{theorem}[SLLN]\label{thm:ergodic}
Suppose that $\MH(\pi,q)$ restricted to $\mathrm{supp}(\pi)$ is $\pi$-irreducible. Then $\MH(\pi,\gq)$ restricted to $\K=\mathrm{supp}(\pi)$ is also $\pi$-irreducible and Harris recurrent. Moreover, the Strong Law of Large Numbers holds: 
if $\{X_i\}_{i\in\NN}$ is the skipping sampler (generated by Algorithm~\ref{alg:skip}) initiated at $X_0=x\in A$, then for every $\pi$-integrable function $f$ we have
$$
	\lim_{n\to\infty}\frac{1}{n}\sum_{i=0}^n f(X_i)\quad \stackrel{a.s.}{=} \quad\int_{\RR^d}f(x)\pi(x)dx\,.
$$
\end{theorem}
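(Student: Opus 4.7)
The plan is to leverage Proposition \ref{prop:m} to transfer properties of $\MH(\pi,q)$ to $\MH(\pi,\gq)$ and then appeal to standard Markov chain theory.

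\textbf{Step 1: Invariance.} Since $\gq$ is symmetric on $\K$ by Proposition \ref{prop:m}(i), the usual Metropolis detailed-balance computation shows that $\pi$ is a reversible (hence invariant) probability measure for $\MH(\pi,\gq)$ restricted to $\K$. This is a textbook calculation I would only sketch.

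\textbf{Step 2: $\pi$-irreducibility.} Let $P_\cK$ and $P$ denote the transition kernels of $\MH(\pi,\gq)$ and $\MH(\pi,q)$ restricted to $\K$. Their off-diagonal parts are respectively $\gq(x,z)\alpha(x,z)\,dz$ and $q(z-x)\alpha(x,z)\,dz$, so Proposition \ref{prop:m}(ii) gives the Peskun-type domination $P_\cK(x,B)\geq P(x,B)$ for every Borel $B\subset \K$ with $x\notin B$. By induction on the number of steps, this off-diagonal domination propagates to $P_\cK^n(x,D)\geq P^n(x,D)$ for every $x$ and every $D\subset \K\setminus\{x\}$ with $\pi(D)>0$; since $\pi$ is atomless we may always discard the singleton $\{x\}$ from $D$. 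The assumed $\pi$-irreducibility of $\MH(\pi,q)$ therefore implies the same for $\MH(\pi,\gq)$.

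\textbf{Step 3: Harris recurrence.} Here I would invoke the classical result (Tierney, \emph{Ann.\ Statist.} 1994, Corollary 2; see also Roberts--Rosenthal \emph{Prob.\ Surv.} 2004, Theorem 8) that every $\pi$-irreducible Metropolis--Hastings chain with $\pi$ as invariant distribution is Harris recurrent. Verifying the hypotheses of that theorem reduces to Steps 1--2 together with the fact that $\gq$ admits a density with respect to Lebesgue measure on $\K$ (inherited from the continuity of $q$), so there is no degenerate accumulation of rejection mass outside a $\pi$-null set. This is the step I expect to be the main obstacle, because one has to check carefully that the standard MH Harris-recurrence argument still applies to the skipping proposal whose density $\gq$ is only defined implicitly via the series in \eqref{eq:qcalk}; the ingredients needed are (a) $\pi$-a.e.\ strict positivity of the rejection probability or of the off-diagonal transition, and (b) the absolute continuity of $P_\cK(x,\cdot)$ off $\{x\}$, both of which follow from Assumption \ref{ass:qSRW} and Proposition \ref{prop:m}(ii).

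\textbf{Step 4: SLLN.} Having established positive Harris recurrence (Step 3 plus the fact that the invariant $\pi$ from Step 1 is a probability measure), the Strong Law of Large Numbers for $\pi$-integrable $f$ and arbitrary initial $x\in\K$ follows immediately from the ergodic theorem for Harris chains (e.g.\ Meyn--Tweedie, Theorem 17.1.7). Because Harris recurrence eliminates the null-set caveat, the conclusion holds for \emph{every} starting point in $\K$, which matches the form of the theorem statement.
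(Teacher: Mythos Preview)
Your Steps 1, 3 and 4 align with the paper's argument (invariance by detailed balance, Harris recurrence via Tierney's Corollary~2, and the SLLN via Meyn--Tweedie Theorem~17.1.7). The gap is in Step~2.

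The claim that one-step off-diagonal domination $P_\cK(x,B)\geq P(x,B)$ (for $x\notin B$) ``propagates by induction'' to $P_\cK^n(x,D)\geq P^n(x,D)$ is false. The induction step fails because, writing $P(x,dy)=p(x,y)\,dy+r(x)\delta_x(dy)$ and similarly for $P_\cK$, you have $p_\cK\geq p$ but $r_\cK\leq r$; in the Chapman--Kolmogorov decomposition
\[
P^{n+1}(x,D)=\int P^n(y,D)\,p(x,y)\,dy+r(x)\,P^n(x,D),
\]
the second term can be strictly larger for $P$ than for $P_\cK$, and this is not compensated by the first. A three-state reversible example makes this concrete: with $\pi$ uniform on $\{1,2,3\}$ and symmetric kernels
\[
P=\begin{pmatrix}0.9&0.1&0\\0.1&0.89&0.01\\0&0.01&0.99\end{pmatrix},\qquad
P_\cK=\begin{pmatrix}0.4&0.1&0.5\\0.1&0.89&0.01\\0.5&0.01&0.49\end{pmatrix},
\]
one has $P_\cK(i,j)\geq P(i,j)$ for all $i\neq j$, yet $P_\cK^2(1,\{2\})=0.134<0.179=P^2(1,\{2\})$. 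Peskun domination simply does not imply $n$-step off-diagonal domination.

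The paper avoids this by comparing $P_\cK^n$ not to $P^n$ but to the \emph{all-accepted} sub-kernel of $P$: writing $\cA_n$ for the event that the first $n$ RWM proposals are all accepted, Lemma~\ref{lem:irredaux} shows
\[
P_\cK^n(x,B)\;\geq\;\PP_x\bigl(\{X_n\in B\}\cap\cA_n\bigr),
\]
which \emph{does} follow by induction since one only iterates the off-diagonal density $p$ (no rejection term enters). One then needs the auxiliary Lemma~\ref{lem:nonz} that $\PP_x(\cA_m)>0$ for all $m$, and a decomposition of $\{X_n\in B\}$ by the number of rejections to pass from $P^n(x,B)>0$ to the all-accepted event at some smaller index. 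Your argument can be repaired along exactly these lines.
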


The conditions of Theorem \ref{thm:ergodic}, which are mild, are discussed in Section \ref{sec:hi}. There are also cases where $\MH(\pi,q)$ is not irreducible but $\MH(\pi,\gq)$ is, for instance when the dimension $d=1$, $q$ is a random walk proposal with finite support, and $\K^c$ is an interval too wide to be crossed by a single random walk step, but which can be skipped across.

The statement of the second main result uses some additional notation (for further details see \cite{RobertsRosenthal97}). Consider the Hilbert space $L^2(\pi)$ of square-integrable functions with respect to $\pi$, equipped with the inner product (for $f,g\in L^2(\pi)$)
$$
	\langle f,g\rangle \, := \, \int_{\RR^d}f(x)g(x)\pi(x)dx \,=\, \int_\K f(x)g(x)\pi(x)dx.
$$
Since all Metropolis-class chains are time reversible, the Markov kernel of $\MH(\pi,q)$ defines a bounded self-adjoint linear operator $P$ on $L^2(\pi)$, defined for $f\in L^2(\pi)$ via
\begin{align*}
	Pf(x) \,  := & \, \int_{\RR^d}f(y)\alpha(x,y)q(y-x)dy + \left(1-\int_{\RR^d}\alpha(x,y)q(y-x)dy\right)f(x).
\end{align*}
If $P$ is irreducible then its operator norm is $\|P\|=1$, with $f \equiv 1$ as the unique eigenfunction for the eigenvalue $1$, and the spectral gap of $P$ is defined to be $\lambda:=1-\sup_{\{f~:~\|f\|=1,~\pi(f)=0\}}\langle Pf,f\rangle$.

\begin{theorem}
\label{thm:CLT}
Under the conditions of Theorem~\ref{thm:ergodic}, denoting respectively by $P$ and $\gP$ the Markov kernels of $\MH(\pi,q)$ and $\MH(\pi,\gq)$ restricted to $\K=\mathrm{supp}(\pi)$, the following statements hold:
\begin{enumerate}[i)]
	\item For every $f\in L^2(\pi)$ we have $\langle \gP f,f\rangle\leq \langle P f,f\rangle$;
	\item If $\MH(\pi,q)$ has a non-zero spectral gap $\lambda$, then $\MH(\pi,\gq)$ also has a non-zero spectral gap $\lambda_\mathcal{K}$ that satisfies $\lambda_\mathcal{K}\geq \lambda$;
	\item If the central limit theorem (CLT) holds for $\MH(\pi,q)$ and function $f$ with asymptotic variance $\sigma^2(f)$, that is
	$$
		\sqrt{n}\left(\frac{1}{n}\sum_{i=0}^nf(X_i)-\pi(f)\right) \quad
		\longrightarrow\quad N(0,\sigma^2(f))
	$$
	in distribution, then the CLT also holds for $\MH(\pi,\gq)$ and the same function $f$, with asymptotic variance $\sigma_\cK^2(f)$ satisfying $\sigma_\cK^2(f) \leq \sigma^2(f)$. 
\end{enumerate}
\end{theorem}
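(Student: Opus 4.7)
My plan is to deduce all three statements from a Peskun-type ordering of the kernels $P_\mathcal{K}$ and $P$ on $\K$, the comparison itself being an immediate consequence of Proposition~\ref{prop:m}(ii).

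First I would set up the off-diagonal comparison. Fix $x \in \K$ and a measurable $B \subseteq \K$ with $x \notin B$. The acceptance ratio $\alpha(x,z) = \min(1, \pi(z)/\pi(x))$ is the same for both chains, so
\[
P(x,B) \,=\, \int_B \alpha(x,z)\, q(z-x)\,dz, \qquad
P_\mathcal{K}(x,B) \,=\, \int_B \alpha(x,z)\, q_\mathcal{K}(x,z)\,dz,
\]
and the pointwise bound $q_\mathcal{K}(x,z) \geq q(z-x)$ on $\K\times\K$ from Proposition~\ref{prop:m}(ii) gives $P_\mathcal{K}(x,B) \geq P(x,B)$ for every $x \notin B$. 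This is exactly the Peskun ordering $P_\mathcal{K} \succeq P$ (in off-diagonal form, as befits reversible kernels).

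The three parts then follow quickly. For (i) I would use the Dirichlet-form identity
\[
\langle (I-P)f,f\rangle \,=\, \tfrac{1}{2}\int\!\!\int (f(x)-f(z))^2\, P(x,dz)\,\pi(dx),
\]
valid for reversible $P$, in which only off-diagonal mass of the kernel contributes. The Peskun ordering above yields $\langle (I-P_\mathcal{K})f,f\rangle \geq \langle (I-P)f,f\rangle$, equivalently $\langle P_\mathcal{K}f,f\rangle \leq \langle Pf,f\rangle$ for all $f\in L^2(\pi)$. Statement (ii) is then immediate upon taking suprema over $\{f\in L^2(\pi):\|f\|=1,\,\pi(f)=0\}$ in the definition of the spectral gap given in the excerpt, which also ensures $\lambda_\mathcal{K}\geq\lambda>0$. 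For (iii) I would invoke Tierney's asymptotic-variance comparison for reversible Markov kernels under Peskun ordering: if the CLT holds for $\MH(\pi,q)$ with finite asymptotic variance $\sigma^2(f)$, then it holds also for $\MH(\pi,q_\mathcal{K})$ with $\sigma_\mathcal{K}^2(f)\leq\sigma^2(f)$; the latter inequality may alternatively be read off directly from the spectral representation of the asymptotic variance of a reversible self-adjoint kernel combined with the quadratic-form inequality from (i).

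The only real subtlety is bookkeeping: one must consistently view both kernels as bounded self-adjoint operators on the same $L^2(\pi)$, which is why the chains are restricted to the natural state space $\K=\mathrm{supp}(\pi)$, as already justified at the end of the proof of Proposition~\ref{prop:m}. Once that is in place the argument is essentially mechanical and the whole proof reduces to the pointwise proposal-density inequality of Proposition~\ref{prop:m}(ii); I do not anticipate any significant analytic obstacle beyond this.
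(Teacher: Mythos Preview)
Your proposal is correct, and the underlying idea---deriving everything from the pointwise inequality of Proposition~\ref{prop:m}(ii)---is exactly the paper's. Parts (ii) and (iii) are handled the same way (the paper cites \cite{mira2009covariance} rather than Tierney for (iii), but the content is identical).

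For (i) the packaging differs. You establish the off-diagonal Peskun ordering $P_\mathcal{K}(x,\cdot)\ge P(x,\cdot)$ and then appeal to the Dirichlet-form identity $\langle (I-P)f,f\rangle=\tfrac12\iint (f(x)-f(z))^2\,P(x,dz)\,\pi(dx)$ as a black box. The paper instead expands $\langle (P-P_\mathcal{K})f,f\rangle$ directly, applies $2f(x)f(y)\le f(x)^2+f(y)^2$, and then invokes a short symmetry lemma (Lemma~\ref{lem:integration}) using that $\Delta(x,y):=\alpha(x,y)\pi(x)\bigl(q_\mathcal{K}(x,y)-q(y-x)\bigr)$ is symmetric on $\K\times\K$ (via $\alpha(x,y)\pi(x)=\min(\pi(x),\pi(y))$ and Proposition~\ref{prop:m}(i)). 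This is precisely the computation that underlies the Dirichlet-form identity for reversible kernels, so the two arguments are mathematically the same; your version is shorter and makes the Peskun structure explicit, while the paper's is fully self-contained.
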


The inequality at point (i) of Theorem~\ref{thm:CLT} gives a useful way to compare performance and mixing of different Markov kernels. Indeed, one can consider the Peskun-Tierney partial ordering (see \cite{Peskun73} and \cite{mira2001ordering,mira2009covariance,Tierney1998}) on the family of bounded self-adjoint linear operators on $L^2(\pi)$ by setting $P_1\geq P_2$ whenever $\langle P_1 f,f\rangle\leq \langle P_2 f,f\rangle $ holds for all $f\in L^2(\pi)$. 

Intuitively, point (ii) of Theorem~\ref{thm:CLT} means that the skipping sampler has the potential to mix faster than the classical random walk Metropolis, i.e., converge to stationarity in fewer steps. As explained in Section 2.1 of \cite{RudolfUllrich2018} the speed of convergence to stationarity can also be measured by other analytical quantities of the form $\inf_{f\in \mathcal{M}}\langle(I-P)f,f\rangle$ for some subset $\mathcal{M}$ of $L^2(\pi)$; it is straightforward to modify Theorem~\ref{thm:CLT} accordingly. In the case of the spectral gap presented above we have $\mathcal{M}=\{f\in L^2(\pi) ~:~ \pi(f)=0\text{ and }\pi(f^2)=1\}$. It follows from point (iii) that in stationarity, the samples produced by the skipping sampler are at least as good for estimating $\pi(f)$ as those generated by RWM.

These theoretical benefits are balanced by increased computational complexity. The exploration added by the skipping sampler relative to RWM carries a computational cost, and the tradeoff between cost and benefit depends on the target density. In particular, this tradeoff could become disadvantageous if evaluating the target density (and thus assessing the event $\{ Z_k \in A \}$) in Algorithm \ref{alg:skip} carries high cost. In the absence of global knowledge of the target, a pragmatic approach would be to run both methods and try to judge between their output. In Section~\ref{sec:slice}, for example, we have compared the mean squared error of the coordinate projections against the increased number of evaluations of the target density. 
As noted in Section \ref{sec:q}, the evaluations of the target density can also be vectorised with the aim of decreasing computation time.

Sufficient conditions for parts (ii) and (iii) of Theorem~\ref{thm:CLT} have been studied in the literature. An aperiodic reversible Markov chain has non-zero spectral gap if and only if it is geometrically ergodic (see \cite{RobertsRosenthal97}), a property which is explored  in~\cite{jarner,mengersen,roberts2} for random walk Metropolis algorithms. The CLT holds essentially for all $f\in L^2(\pi)$ under the assumption of geometric ergodicity (see \cite[Section~5]{roberts}), but also holds more generally (see \cite{JarnerRoberts}).

\section{Implementation and extensions}\label{sec:implementation}
\label{sec:hi}
Implementing Algorithm \ref{alg:skip} involves two choices, an underlying proposal density $q$ and a halting index $\cK$, which are discussed respectively in Sections \ref{sec:q} and \ref{sec:K}. An alternative to Algorithm \ref{alg:skip} using a `doubling trick' for greater computational efficiency is given in Section \ref{sec:doubling}.

\subsection{Choice of $q$}\label{sec:q}

In addition to Assumptions \ref{ass:qSRW}--\ref{ass:FiniteSkipping}, to ensure that the SLLN holds (Theorem \ref{thm:ergodic}) we require $\MH(\pi,q)$ to be $\pi$-irreducible. This holds, for example, when $\pi$ is continuous and bounded and $q$ is everywhere positive. 
More generally, $\MH(\pi,q)$ is also irreducible if the interior of $\K$ is (non-empty) connected and there exist $\delta,\epsilon>0$ such that $q(x)>\epsilon>0$ whenever $\|x\|<\delta$ (see \cite[Section~2.3.2]{tierney}).

Since skipping can be seen as a way of endowing RWM with an improved ability to cross regions of zero density, a minimal approach would be to tune $q$ as if it were to be employed in the RWM algorithm, for example achieving an acceptance ratio around $25\%$ when $q$ is employed in RWM. However we have observed empirically that a lower acceptance ratio, for example 15\%, may further stimulate skipping.

\subsubsection{Computational aspects}

For sampling of the i.i.d.~radial increments $R_1,R_2,\dots$, it is desirable to choose $q$ such that samples may be drawn efficiently from
\begin{align}\label{eq:rcondphi}
\|Y-X\| ~\text{ conditional on }~ \Phi=\frac{Y-X}{\|Y-X\|}=\varphi,
\end{align}
for all $\varphi\in\SS$. Convenient cases include when $q$ is radially symmetric so that conditioning is not required, or when $q \sim \mathcal N(0,\Sigma)$ for some  $d\times d$ covariance matrix $\Sigma$, so that, given direction $\varphi$, each increment $R_i$ follows a generalised gamma distribution with density
$$
\frac{(\varphi^T \Sigma^{-1} \varphi)^{d/2}}{2^{d/2-1} \Gamma(\frac{d}{2})} \,r^{d-1}\, e^{-(\varphi^T \Sigma^{-1} \varphi) \frac{ r^2 }{2}  } \,.
$$
Alternatively one may specify $q$ indirectly by choosing the unconditional distribution of $\Phi$ and the conditional distribution of $R:=\|Y-X\|$ given $\Phi$, then checking that the conditions of Theorem \ref{thm:ergodic} are satisfied.

If sampling from the distribution \eqref{eq:rcondphi} is computationally expensive, however, the sampler may be modified by setting all $R_k$ equal to $R$, so that only a single sample is required to generate a proposal and the skipping chain keeps moving in the direction $\Phi$ with jumps of equal size. While this modification would not change the mean distance $\|Z_m-Z_0\| = \sum_{i=1}^m R_i$ covered by $m$ steps of the skipping chain, it would increase its variance to $m^2 \mathrm{Var}(R)$. 

\subsubsection{Anisotropy \label{sec:aniso}}
If $\K$ has a known anisotropy, the angular part of the underlying proposal may be chosen to favour certain directions in comparison to others, for example by tailoring the covariance matrix in a normal proposal $q\sim\mathcal{N}(0,\Sigma)$. This may be useful in high dimensional problems where otherwise, with high probability, the skipping chain may fail to re-enter $\K$.  It is not difficult to show that if the distance increment retains the properties used in the proof of Proposition \ref{prop:m}, then the acceptance ratio~\eqref{eq:alpha} depends additionally on the ratio of the angular densities. Denoting by $q_\varphi(x,\phi)$ the density of direction $\phi$ at the location $x$, for $\Phi=\frac{Y_{n+1}-X_n}{\|Y_{n+1}-X_n\|}$ the acceptance probability then equals
$$
\alpha(X_n,Y_{n+1})
\quad=\quad
\min\left(1,\frac{\pi(Y_{n+1})q_\varphi(Y_{n+1},-\Phi)}{\pi(X_n)q_\varphi(X_n,\Phi)}\right)\,.
$$

Although beyond the scope of this paper, in the absence of geometric knowledge of $A$ other information, for instance the history of the chain, may be used in an online fashion to make the angular part of the underlying proposal density dependent on the chain's current location. 

\subsection{Choice of $\cK$}\label{sec:K}

The simplest choice is a nonrandom halting index $K \equiv k_s \in \mathbb{Z}_{>1}$. Under this choice the $k_s$ skips can be vectorised and stored in memory along with the corresponding states $Z_i$ for $i=1,\dots k_s$, and the evaluations of whether  $Z_i\in\K$ for $i=1,\dots k_s$ can then be performed in parallel. This increases computational speed at the expense of a $k_s$-times higher memory requirement plus the coordination cost of parallelisation, and the balance between benefit and cost is not explored here. However if the additional computational costs are low, and if the costs of evaluating whether $Z_i\in\K$ are bounded, then the skipping sampler may be run at speed approaching that of RWM.

There is of course interplay between the choices for $\cK$ and $q$. For example, if an upper bound $D$ is available for the diameter of $\K^c$ then we may use $k_s=\frac{D}{\sup_{\varphi}\sigma_\varphi}$, where $\sigma_\varphi$ denotes the standard deviation of the conditional jump density in the direction $\varphi$. In the anisotropic case of Section \ref{sec:aniso}, {\it mutatis mutandis} the halting index may also be made direction-dependent using a parametric family of constants (or distributions) $\cK_\varphi$, $\varphi \in \SS$. To preserve symmetry it is then necessary that $\cK_\varphi = \cK_{-\varphi}$ for each $\varphi \in \SS$. Similar tradeoffs between $\cK$ and $q$ may also be made when $\cK$ is chosen to be random with finite mean.

If skipping cannot be efficiently parallelised as suggested above then, clearly, large realisations of $\cK$ can result in high computational costs if $\K$ is not re-entered. In the extreme, bearing in mind Assumption \ref{ass:FiniteSkipping}, an unbounded distribution $\cK$ should only be taken if $A^c$ is known to be bounded. If $\cK$ cannot be chosen based on a known diameter $D$ as above, then the absolute length of skipping trajectories may alternatively be controlled probabilistically using a large deviations estimate, as follows. If the conditional jump distribution is $R$ then the probability that a distance $m r$ can be traversed in $m$ skips is approximately (see for example \cite{Dembo2010}):
$$
	\PP \left (\sum_{k=1}^m R_i \geq m r \right ) \approx \mathrm{exp}(-m I(r)), 
$$
where $I(r)=\sup _{\theta >0}[\theta r-\lambda (\theta )]$ is the Legendre-Fenchel transform of $R$, provided that $R$ has finite logarithmic moment generating function, i.e. $ \lambda (\theta )=\ln \E [\exp(\theta R)] < \infty$ for all $\theta \in \RR$. 

Based on the above, if $\cK$ is random and mass is to be placed on large values of $\cK$ then this could lead to large computational costs. In this case the doubling trick of Section \ref{sec:doubling} may be applied.

\subsection{The doubling trick}\label{sec:doubling}

For clarity of exposition we first assume that $A^c$ is convex. From \eqref{eq:skippingchain}, the state $Z_k$ of the skipping chain is the partial sum $x + \Phi \sum_{i=1}^k R_i$, where the $R_i$ are i.i.d.~and $R_1=\|Y-x\|$. Recalling \eqref{eq:st}, define 
\begin{align}
	T_{\K}&:=\min\{k \geq 1 ~:~ \zv_{k}  \in \K \},\\
	S_{\K}&:=\min\{k \geq 1 ~:~ \zv_{2^k-1}  \in \K \}.
\end{align} 
The convexity of $A^c$ induces an ordering on the skipping chain, in the sense that 
\begin{align}
Z_k &\in A^c, \quad \text{ if } k < T_A, \\
Z_k & \in A, \quad \text{ if } k \geq T_A.
\label{eq:order}
\end{align}
If $T_A < K$ then Algorithm \ref{alg:skip} evaluates $T_A$ by sampling the partial sums $\{Z_k\}_{k \geq 1}$ sequentially. The following alternative implementation evaluates $T_\K$ significantly faster, in order $\log_2T_{\K}$ steps. It requires that for any $k$, the sum $\sum_{i=1}^k{R_i}$ may be sampled directly, both unconditionally and given the value of $\sum_{i=1}^{2k}{R_i}$, at a comparable cost to sampling $R_1$. This is possible, for example, when the $R_i$ are exponentially distributed.

The idea is to search forward through the exponential subsequence $Z_1,Z_3,Z_7\dots Z_{2^k-1},\dots$ until $k= \tilde k = S_{\K}$ (so that $Z_{2^{\tilde{k}}-1} \in A$), and then to perform a logarithmic search \cite{2008data} of the sequence $Z_{2^{\tilde{k}-1}-1}, \ldots, Z_{2^{\tilde{k}}-1}$ to identify $T_{\K}$. That is, sample $Z_m$ for $m=2^{\tilde{k}-1}-1+2^{\tilde{k}-2}$ and then, depending on whether or not it lies in $\K$, reduce the search to either the sequence $Z_{2^{\tilde{k}-1}-1}, \ldots, Z_{2^{\tilde{k}-1}-1+2^{\tilde{k}-2}}$ or the sequence $Z_{2^{\tilde{k}-1}-1+2^{\tilde{k}-2}}, \ldots, Z_{2^{\tilde{k}}-1}$, repeating until $T_{\K}$ is found.

For generalisations of this trick, note first that the doubling trick can be used only to accelerate skipping over a convex subset $B \subset A^c$, so that we only add a single distance increment at a time while the skipping chain is in $A^c \setminus B$, and use the doubling trick while in $B$. The idea may then be applied to a maximal convex subset of $A^c$, provided that such a subset is known. Then note that if $B_1,\ldots,B_{n_B}$ are all convex subsets of $A^c$, the doubling trick may be used to traverse each convex subset $B_i$ in turn, if needed. Thus the idea may be applied to an inner approximation of $A^c$ by a union of balls, for example.

\section{Numerical examples}\label{sec:num}

In order to motivate some applications, Section \ref{sec:0} begins with a general discussion of targets for which the skipping sampler offers an advantage over RWM. The numerical example of Section \ref{sec:example3}, in the context of rare event sampling, illustrates an improvement in exploration achieved by our method. Then, in an application to optimisation, Section \ref{sec:example5} provides quantitative examples of performance improvements obtained when the skipping sampler is used as a subroutine in probabilistic methods for non-convex optimisation. The Python code used to create all these numerical examples and figures is available at~\cite{Zocca2021}.

\subsection{General considerations}\label{sec:0}

Note firstly that if the initial proposal $Y$ lies in $A^c$ then it would be rejected by the RWM algorithm. Instead, in Algorithm \ref{alg:skip} it is reused. Thus skipping offers an advantage over RWM if the initial proposal $Y$ regularly lies in $A^c$.
Secondly, when $Y \in A^c$ the skipping proposal $Z$ of Algorithm \ref{alg:skip} needs regularly to be accepted (which in turn necessitates $Z \in A$). By construction (since $Z$ lies beyond $Y$ on the straight line between the current state $X_n \in A$ of the chain and $Y \in A^c$), this requires the support $A$ of the target to be non-convex.

The dimension $d$ also plays a key role. Considering an example where the support $A$ is the union of two disjoint balls in $\mathbb{R}^d$, by increasing $d$ we reduce the probability that $Z \in A$. 
Hence, the benefit of skipping is greatest in low dimensions and then gradually decreases. Nevertheless, in Section~\ref{sec:example3} we show that in special cases the sampler can be beneficial even in high dimensions. 

We also note the following tradeoff. Due to the increased exploration offered by the skipping sampler, the density encountered upon landing at $Z \in \K$ after crossing $\K^c$ may be significantly different from that at the current state $X_n$ of the Markov chain. In particular, if the target density does not vary slowly then the acceptance ratio $\alpha(X,Z)$ may be so low that such skips are not regularly accepted. Although this tradeoff is problem dependent, it does not apply in the rare event example of Section \ref{sec:example3}.

\subsection{Hybrid Slice Sampler}\label{sec:slice}

The slice sampler may be used to sample from a density $\rho$ on $\mathbb{R}^d$ as follows. Given the current sample $X_n \in K$, the following two steps generate the next sample $X_{n+1}$: 
\begin{enumerate}
\item[(i)] pick $t$ uniformly at random from the interval $ [0,\rho(X_n)]$,  
\item[(ii)] sample uniformly from the `slice' or superlevel set $$\K(t) := \{x \in K: \rho(x) \geq t\}.$$
\end{enumerate}
We refer the reader to \cite{Latuszynski2014,neal2003} and references therein for more information on the slice sampler and its convergence properties.

Step (ii) is typically infeasible in multidimensional settings. Instead, in the \textit{Hybrid Slice Sampler} (HSS) a Markov chain is used to approximately sample the uniform distribution on the slice. 
The following example illustrates the potential advantage of using the skipping sampler rather than RWM to generate this chain, since the slice may not be convex.

For $\rho$ we take a uniform mixture of $m=7$ standard normal densities in $d=5$ dimensions, whose means are drawn uniformly at random from a box $B=[-12,12]^5$. The underlying RWM proposal is a spherically symmetric Gaussian, with variance tuned to achieve an acceptance ratio of $23.5\%$ in RWM. Independent trajectories (started in stationarity) of $n=2\cdot 10^5$ steps were generated for the HSS algorithm with respectively the RWM and the skipping sampler used to sample from the superlevel sets. The halting index is taken to be $\PP[K_\varphi=15]=1$ for all $\varphi\in \SS$.

\begin{figure*}[!h]
\centering
\subfloat[Scatter plot of first two coordinates of HSS with RWM.]{\includegraphics[width=0.42\textwidth]{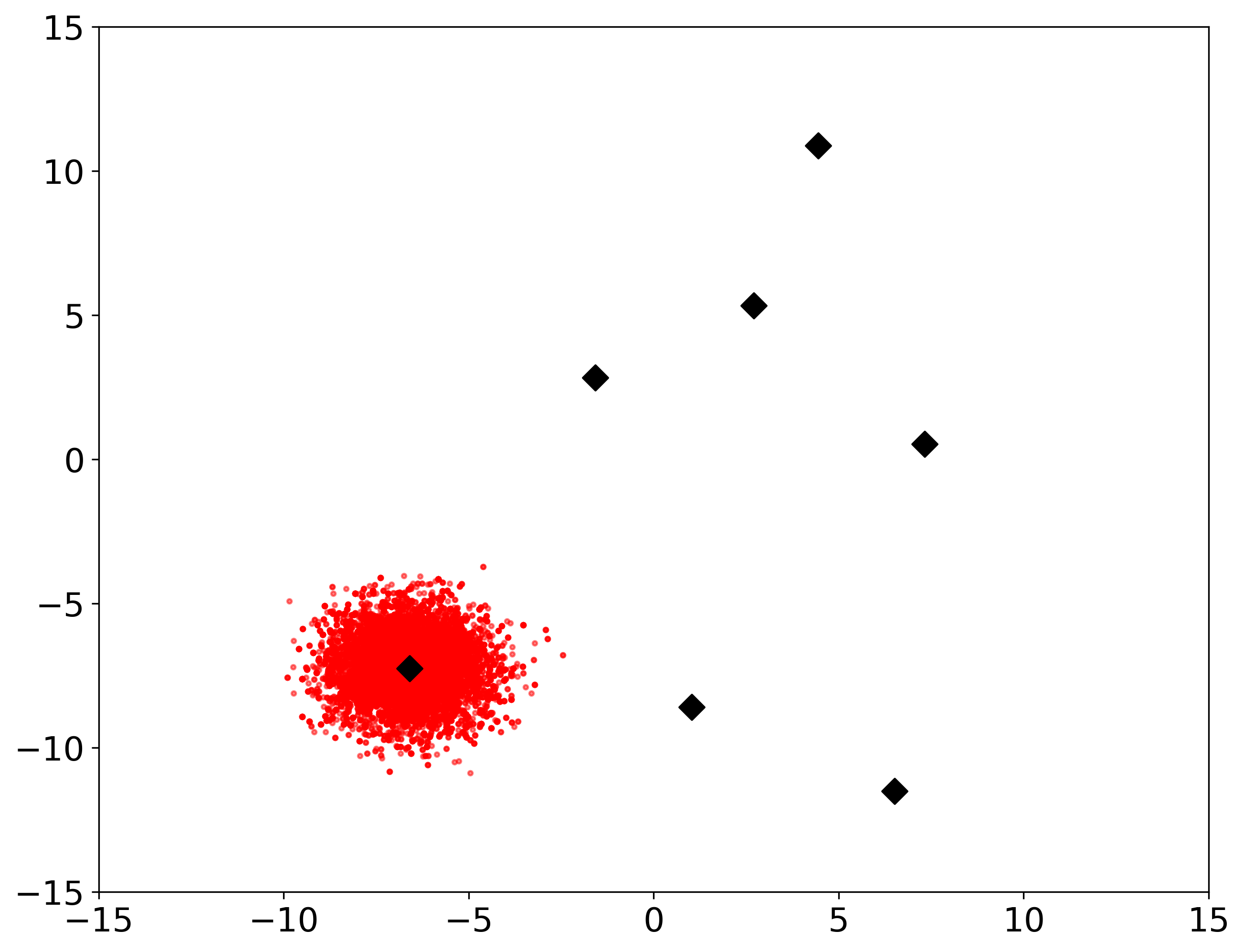}}
\hspace{0.8cm}
\subfloat[Scatter plot of first two coordinates of HSS with skipping sampler trajectory.]{\includegraphics[width=0.42\textwidth]{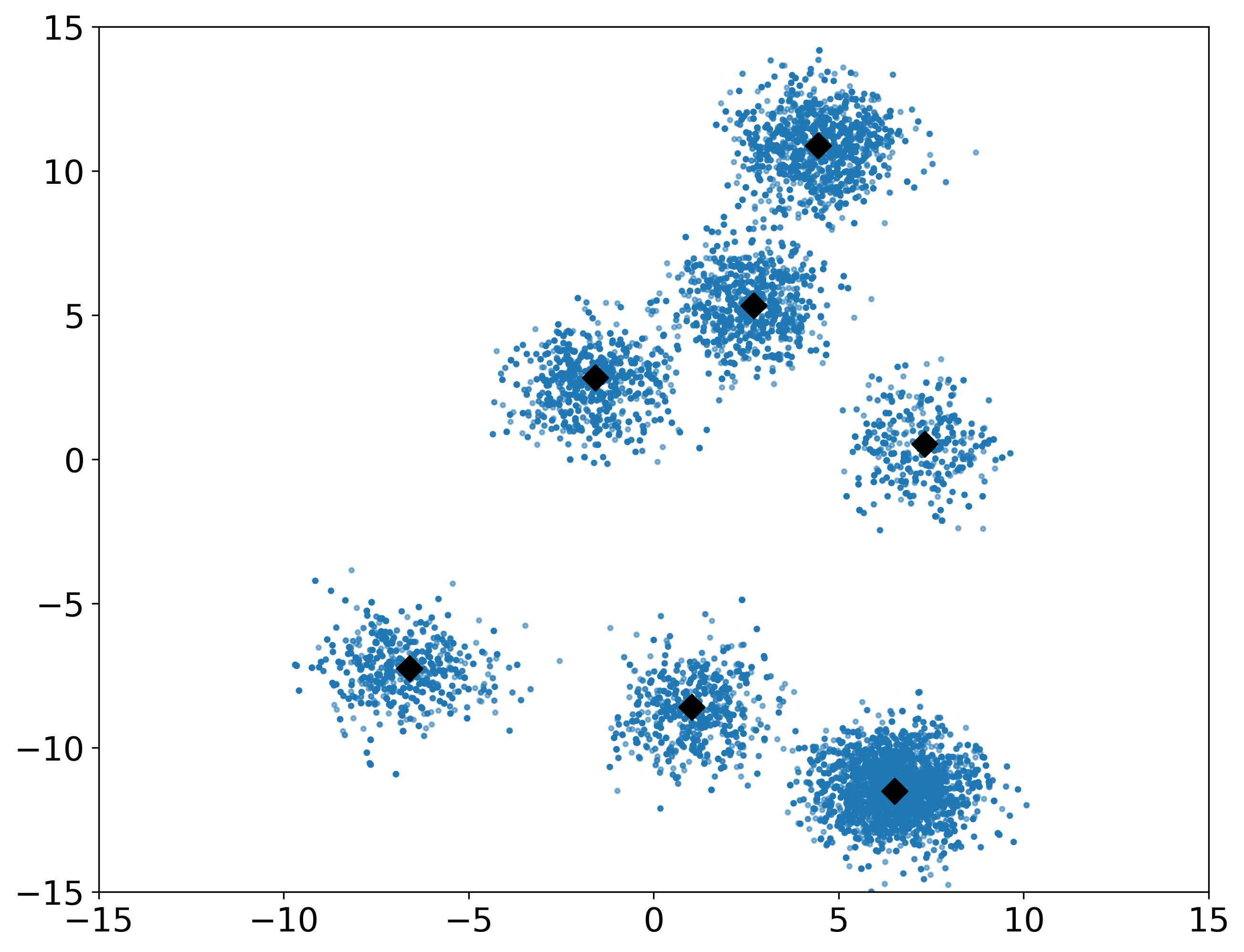}}
\\
\subfloat[First coordinate trajectory of HSS with RWM]{\includegraphics[width=0.435\textwidth]{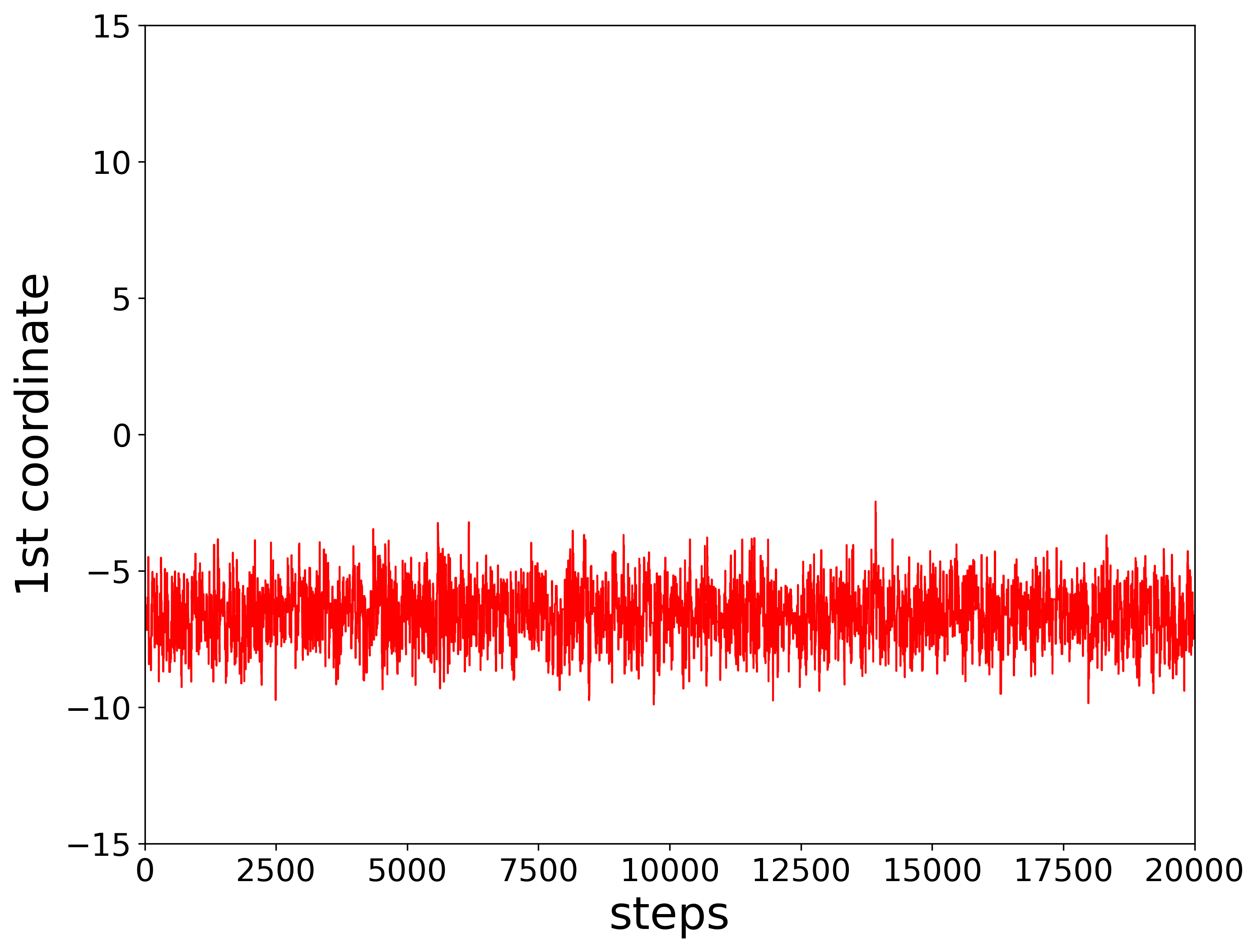}}
\hspace{0.6cm}
\subfloat[First coordinate trajectory of HSS with skipping sampler]{\includegraphics[width=0.435\textwidth]{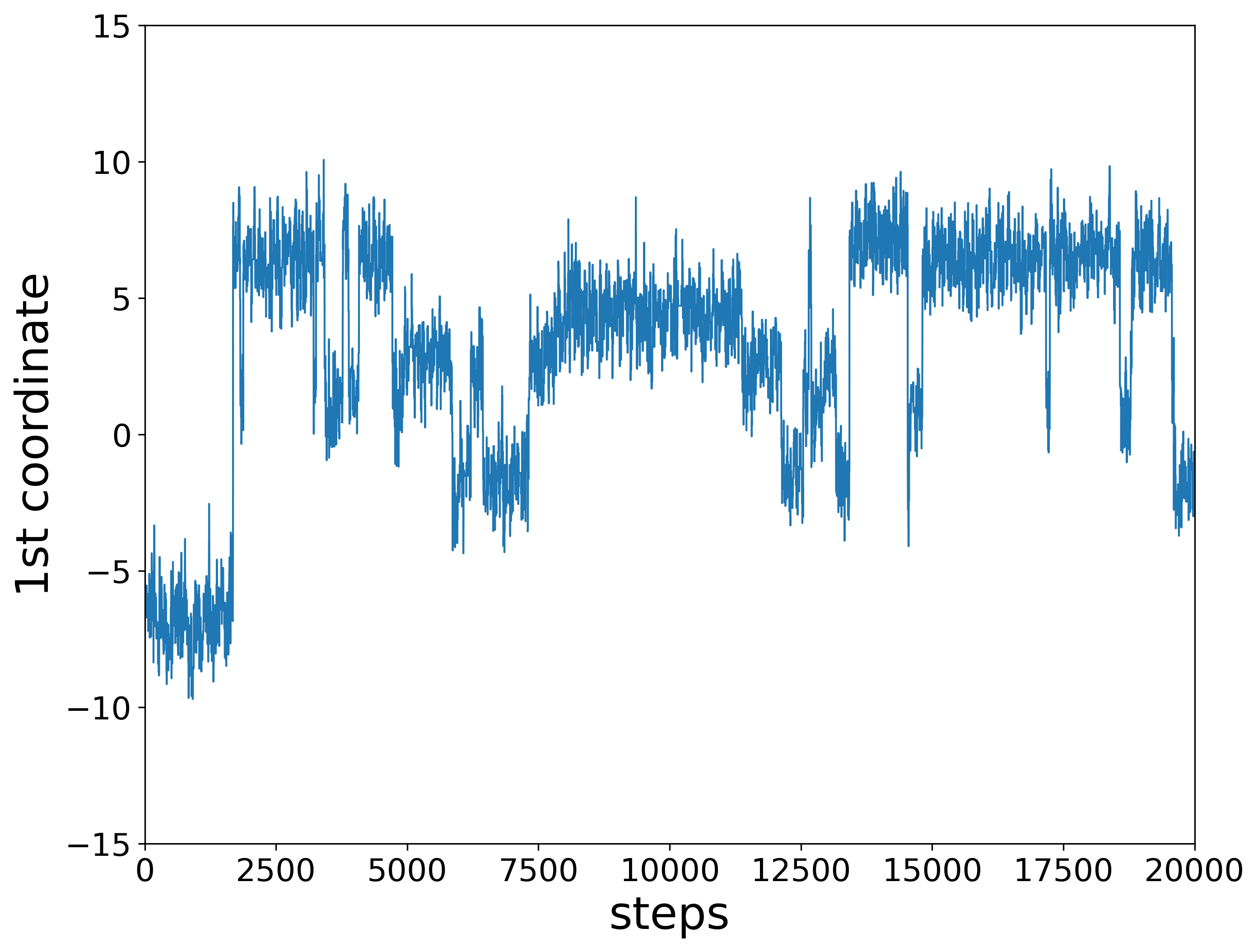}}
\caption{Comparison of RWM and skipping sampler as subroutines in HSS when started at the same point. Black diamonds in (a) and (b) represent true mode centers.}\label{fig:slice}
\end{figure*}

As can be seen from Figure~\ref{fig:slice}, the RWM implementation remains in the mode in which it was initiated. In contrast, the skipping sampler version transitions regularly between the seven modes. The experiment was run $m=100$ times (on the same Gaussian mixture), during which skipping transitions happened on average $16$ times per run. While the number of evaluations of the target density increased $11.61$ fold on average, skipping greatly reduced the mean squared error (MSE) for the estimators of the coordinate-wise means. The MSE for RWM and reduction estimates (MSE for RWM divided by MSE for skipping) for each of the five coordinates are reported in Table~\ref{tab:MSE}. Hence, in this example the skipping sampler is roughly $12$ times more expensive to compute, but produces samples with much greater effective sample size.
\begin{table}[h!]
	\begin{center}
	\begin{tabular}{c|ccccc}
	coordinate  & 1 & 2 & 3 & 4 & 5 \\ \hline \hline
	MSE RWM  &  22.85 & 58.45 & 48.09 & 35.65 & 44.20 \\ \hline
	MSE Skipping & 3.90 & 1.24 & 0.003 & 2.84 & 0.750 \\ \hline
	reduction & 5.86 & 47.23 & 18504 & 12.54 & 58.91
	\end{tabular}
	\end{center}
\caption{Mean squared errors for HSS with RWM or skipping sampler and its ratio}%
\label{tab:MSE}%
\end{table}%

\subsection{Rare event sampling}\label{sec:example3}

The aim in this example is to sample rare points under a complex density $\rho$ on $\RR^d$, by sampling from its intrinsic tail or sublevel set $\K=\{x\in\RR^d ~|~ \rho(x)\leq a\}$ for some $a>0$. 
As an illustration let $\rho$ be a mixture of $m=20$ Gaussian distributions, with randomly drawn means, covariances and mixture coefficients. 

We use the tails given by the levels $a=e^{-15}$ and $a=e^{-350}$ respectively for dimensions $d=2$ and $d=50$. In the case $d=2$, a visual illustration of Theorem \ref{thm:CLT} is provided by plotting comparisons of the exploration achieved in $10^5$ steps of RWM and the skipping sampler respectively. Since the superlevel sets of a finite Gaussian mixture are bounded, in this example we may take the halting index $K=\infty$.

\begin{figure*}[!ht]
\centering
\subfloat[Scatter plot of RWM trajectory]{\includegraphics[width=0.42\textwidth]{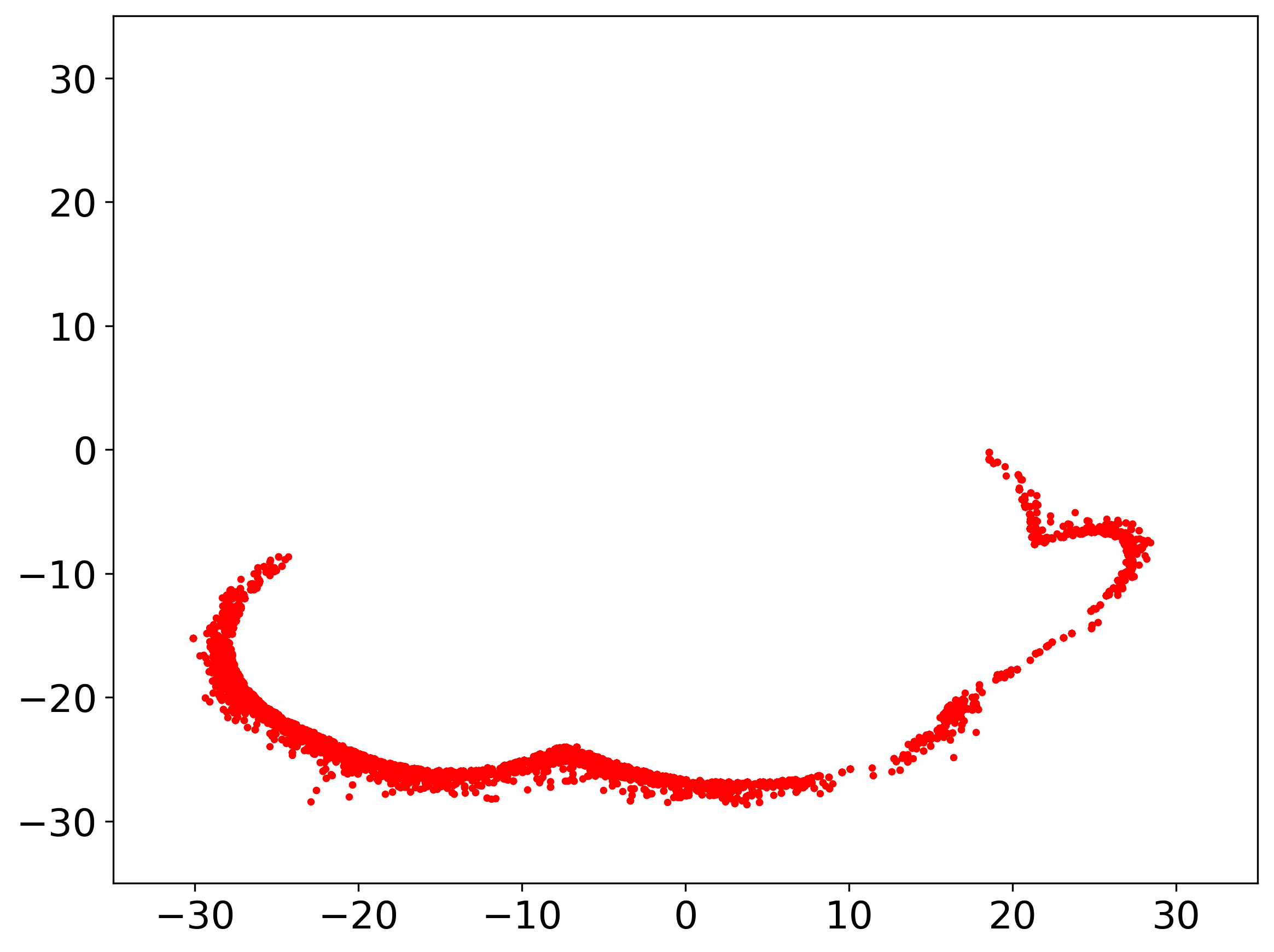}}
\hspace{0.85cm}
\subfloat[Scatter plot of skipping sampler trajectory]{\includegraphics[width=0.42\textwidth]{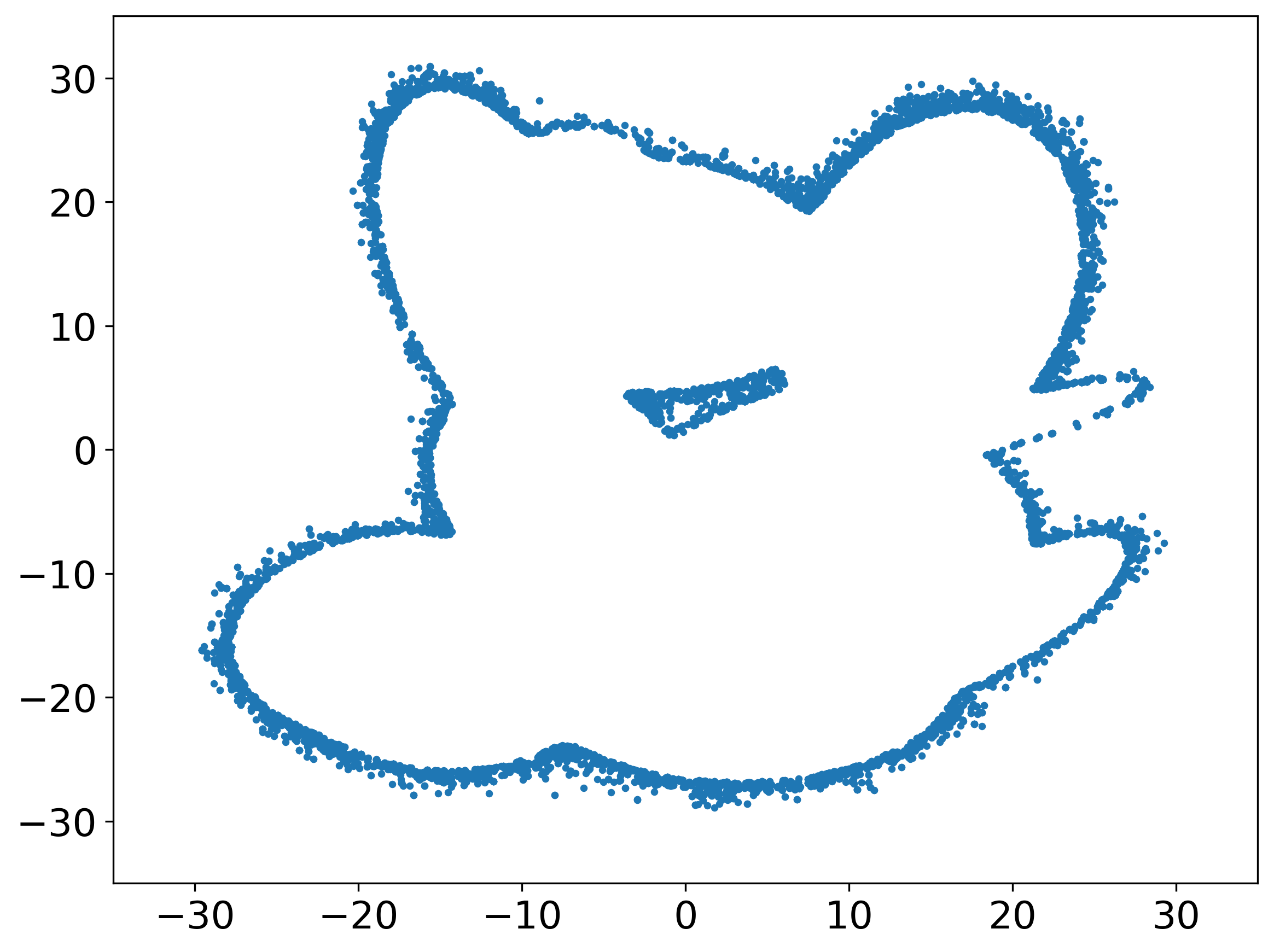}}
\\
\subfloat[First coordinate trajectory of RWM]{\includegraphics[width=0.435\textwidth]{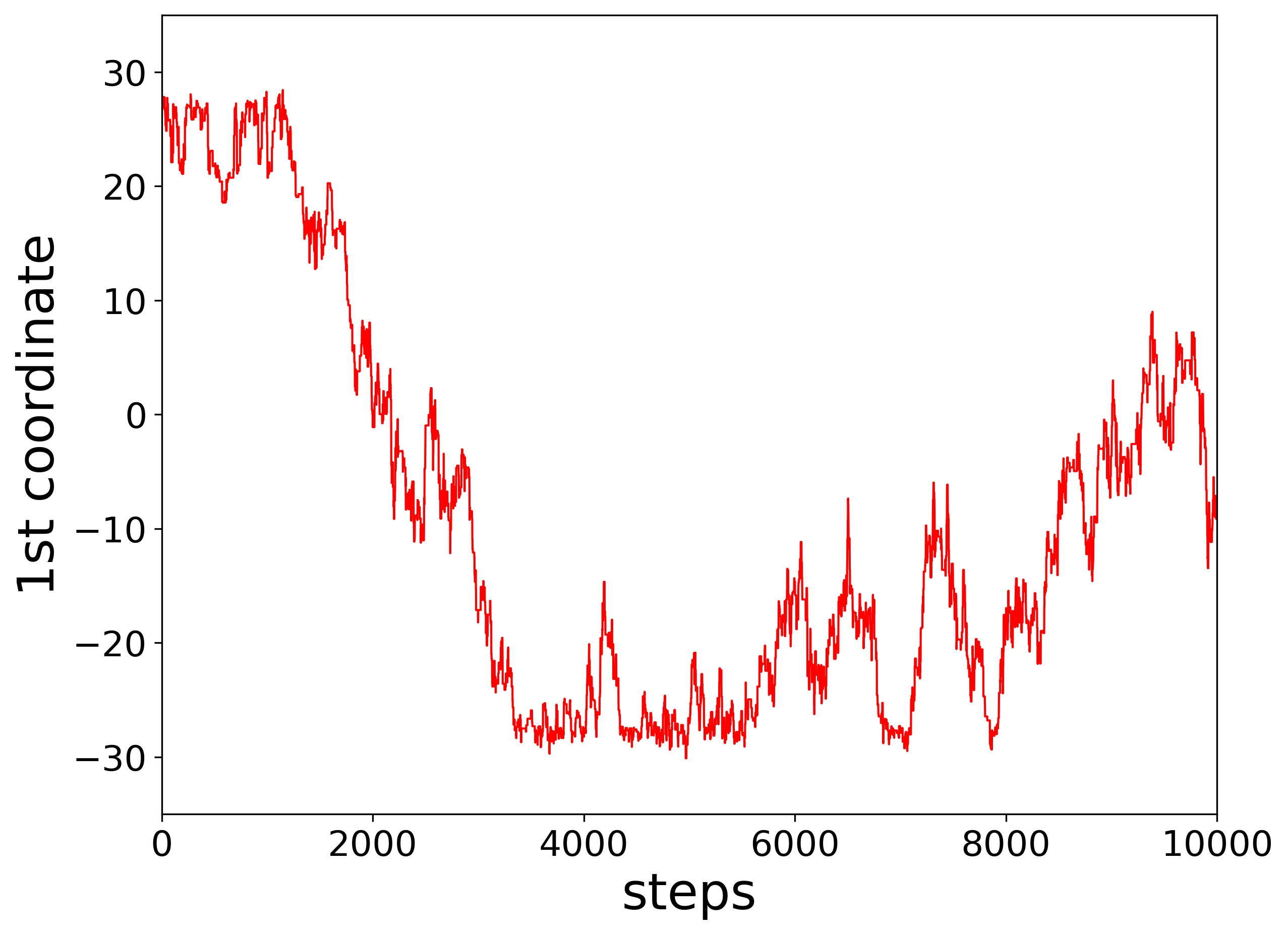}}
\hspace{0.6cm}
\subfloat[First coordinate trajectory of skipping sampler]{\includegraphics[width=0.435\textwidth]{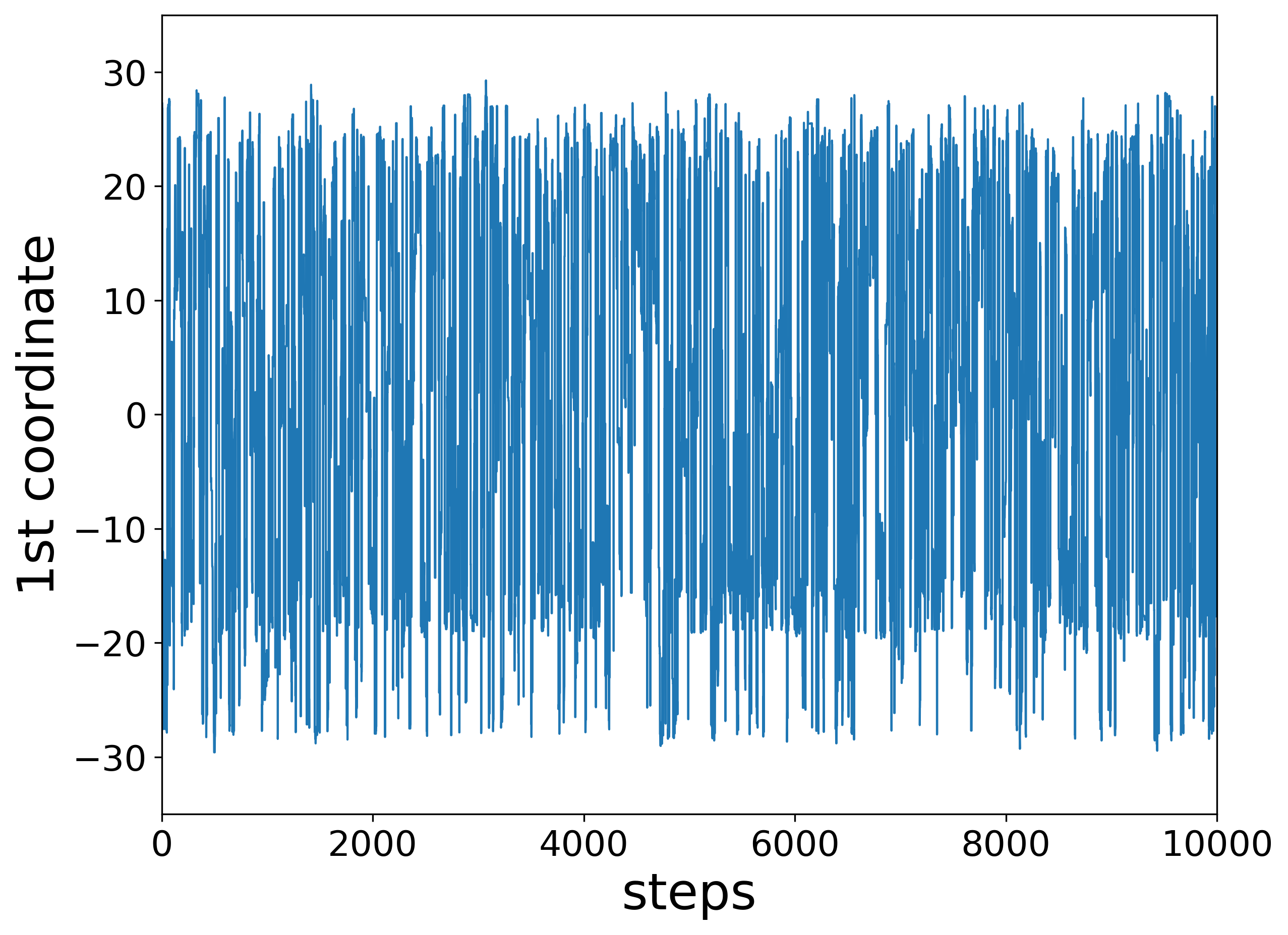}}
\caption{Comparison of RWM and skipping sampler in dimension 2.}\label{fig:2d}
\end{figure*}
\begin{figure*}[!t]
\centering
\subfloat{\includegraphics[width=0.435\textwidth]{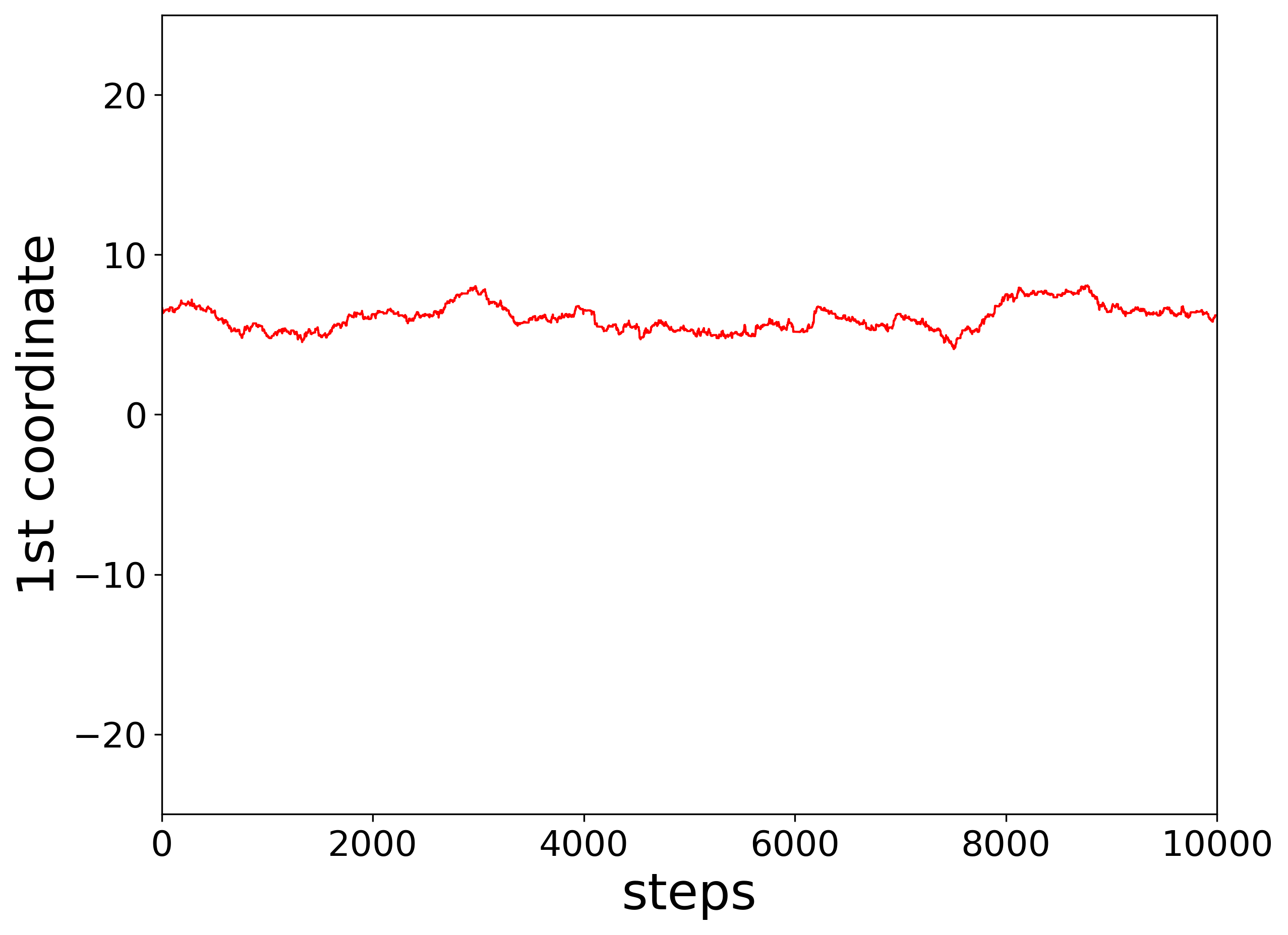}}
\hspace{0.6cm}
\subfloat{\includegraphics[width=0.435\textwidth]{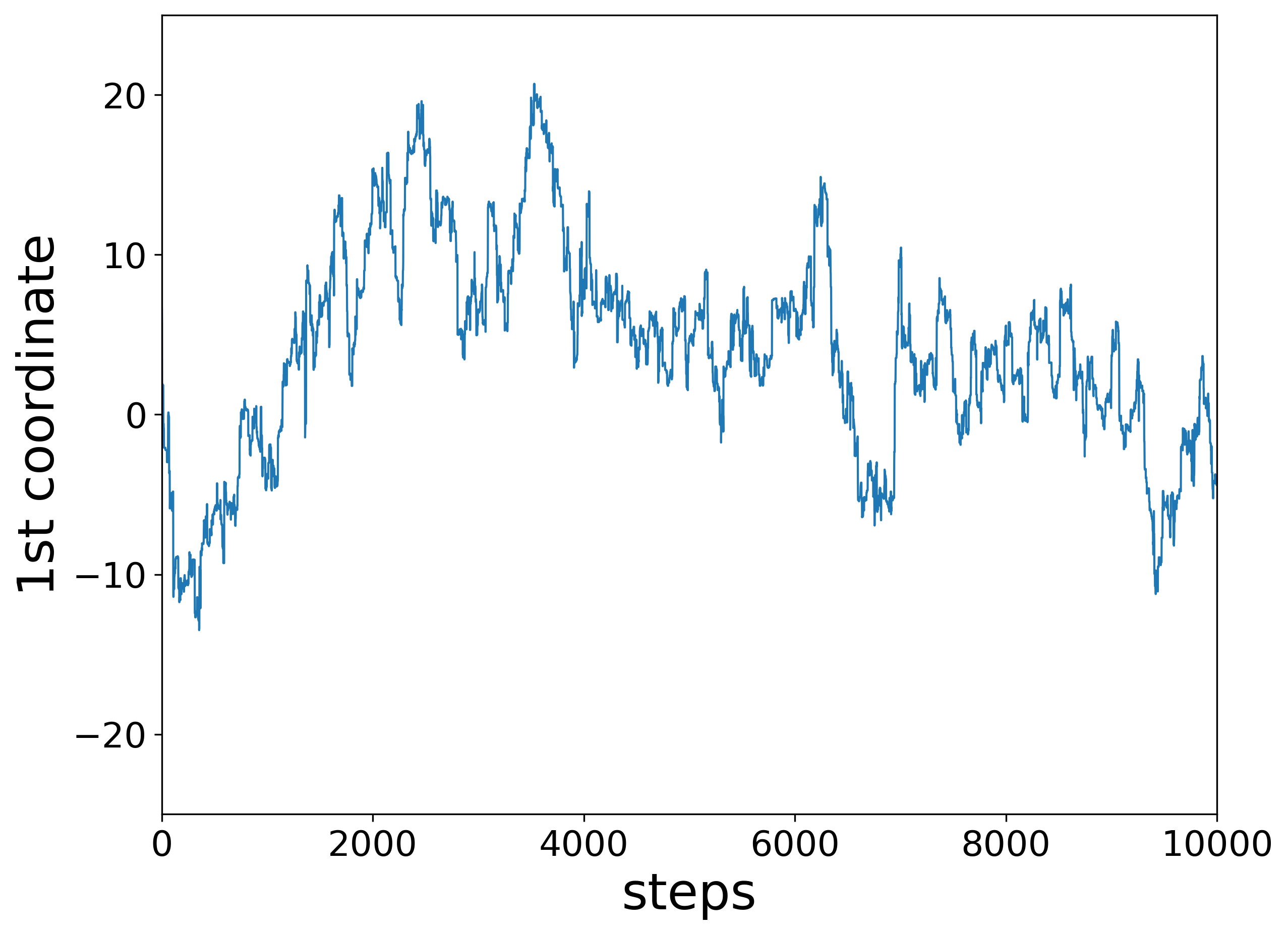}}
\caption{First coordinate trajectory comparison for RWM (left) and skipping sampler (right) in dimension 50.}\label{fig:50d}
\end{figure*}

Figure~\ref{fig:2d} (a)-(b) illustrates that, because of the density's exponential decay, samples from its tail are concentrated around the boundary $\partial A$ of $A$. Figure~\ref{fig:2d} (c)-(d) compares the trajectories of the first coordinate of the chain, showing that while RWM diffuses around $\partial A$, the skipping sampler regularly passes through $A^c$. Indeed, roughly 20\% of the chain's increments were such `skips' through $A^c$, almost half of the accepted proposals. The fact that proposals are often re-used rather than rejected is well illustrated by the acceptance rates, which are $23.7\%$ and $43.3\%$ for RWM and the skipping sampler respectively. Further, $\partial A$ is disconnected. While the `inner' component is not visited by RWM in this sample, the skipping sampler regularly passes through $A^c$ to visit both components, thus exploring $\partial A$ more quickly. Despite $3.45$ 
times more target evaluations required for the skipping sampler, the benefits in this example are clearly worthwhile.

Figure~\ref{fig:50d} shows the evolution of the chain's first coordinate in the case $d=50$. While the boundary of $A$ cannot be easily visualised here, the faster mixing of the skipping sampler is again apparent. 
The successful re-use of proposals by skipping across $A^c$ again constituted approximately $18\%$ of the chain's steps, suggesting superdiffusive exploration. The respective acceptance rates were $22.2\%$ for RWM and $48.1\%$ for the skipping sampler. The benefits of skipping are again seen to be worth the computational cost, since this time the skipping sampler required only $1.44$ times more target evaluations than RWM.

\subsection{Applications to optimisation}
\label{sec:example5}
The challenging problem of finding the global minimum of a non-convex function has attracted much attention and several probabilistic methods and heuristics have been developed, including simulated annealing~\cite{Kirkpatrick1983}, multistart~\cite{Jain1993,Marti2003}, basin-hopping~\cite{Leary2000,Wales1997}, and random search~\cite{Schumer1968}.
In this section we illustrate how the skipping sampler can be used in difficult low-dimensional examples to either bias the choice of initial points of such methods, or as a subroutine, in order to improve exploration.
Below we consider an optimisation problem in $\RR^d$ of the form
\begin{align}\label{eq:opt}
	\min \quad & f(\bm{x}) \quad 
	\text{ s.t. }  
	\, \bm{x} \in D: = \prod_{i=1}^d [l_i,u_i],
\end{align}
and consider as the target density the Bolztmann distribution with temperature $T \geq 0$ and energy function $f$, conditioned on the region $D$, that is
\begin{equation}\label{eq:gibbsdensity}
	\pi(\bm{x}) \propto \exp\left (-f(\bm{x})/T\right )\bm{1}_{\{\bm{x} \in D\}}.
\end{equation} 

\subsubsection{Monotonic skipping sampler}

\begin{figure*}[!t]
\centering
\vspace{-1cm}
\subfloat{\includegraphics[scale=0.265]{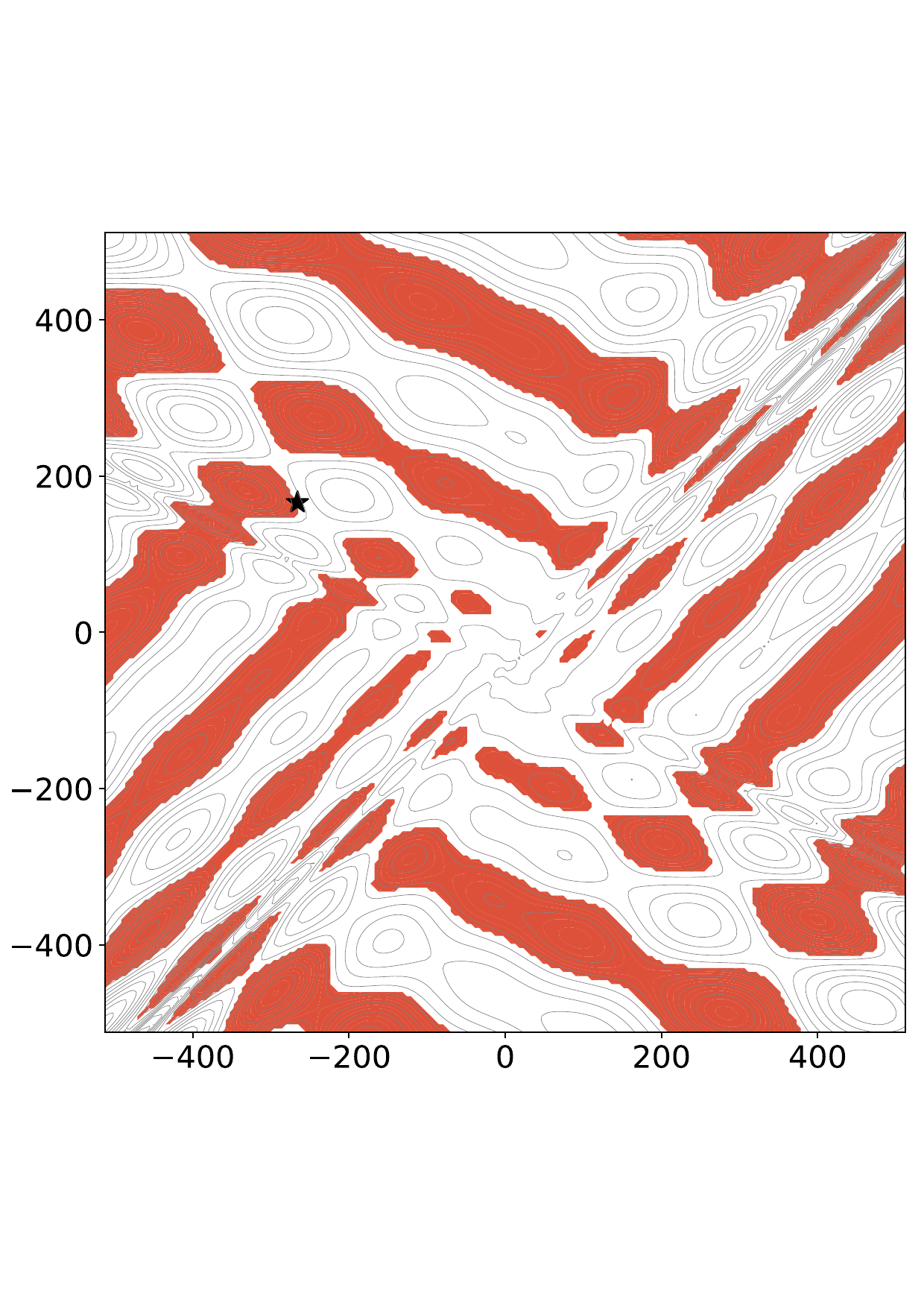}}
\hspace{0.1cm}
\subfloat{\includegraphics[scale=0.265]{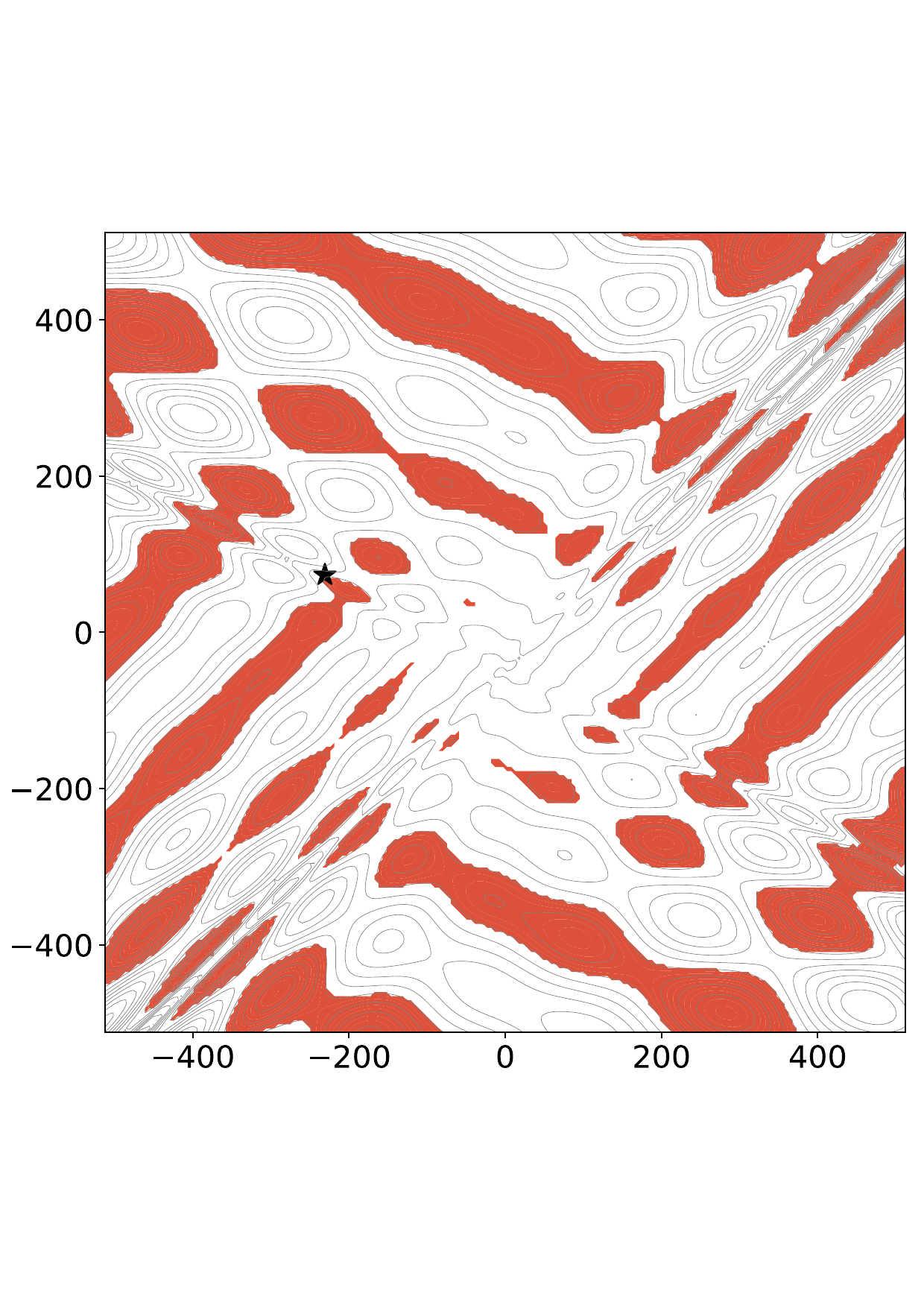}}
\hspace{0.1cm}
\subfloat{\includegraphics[scale=0.265]{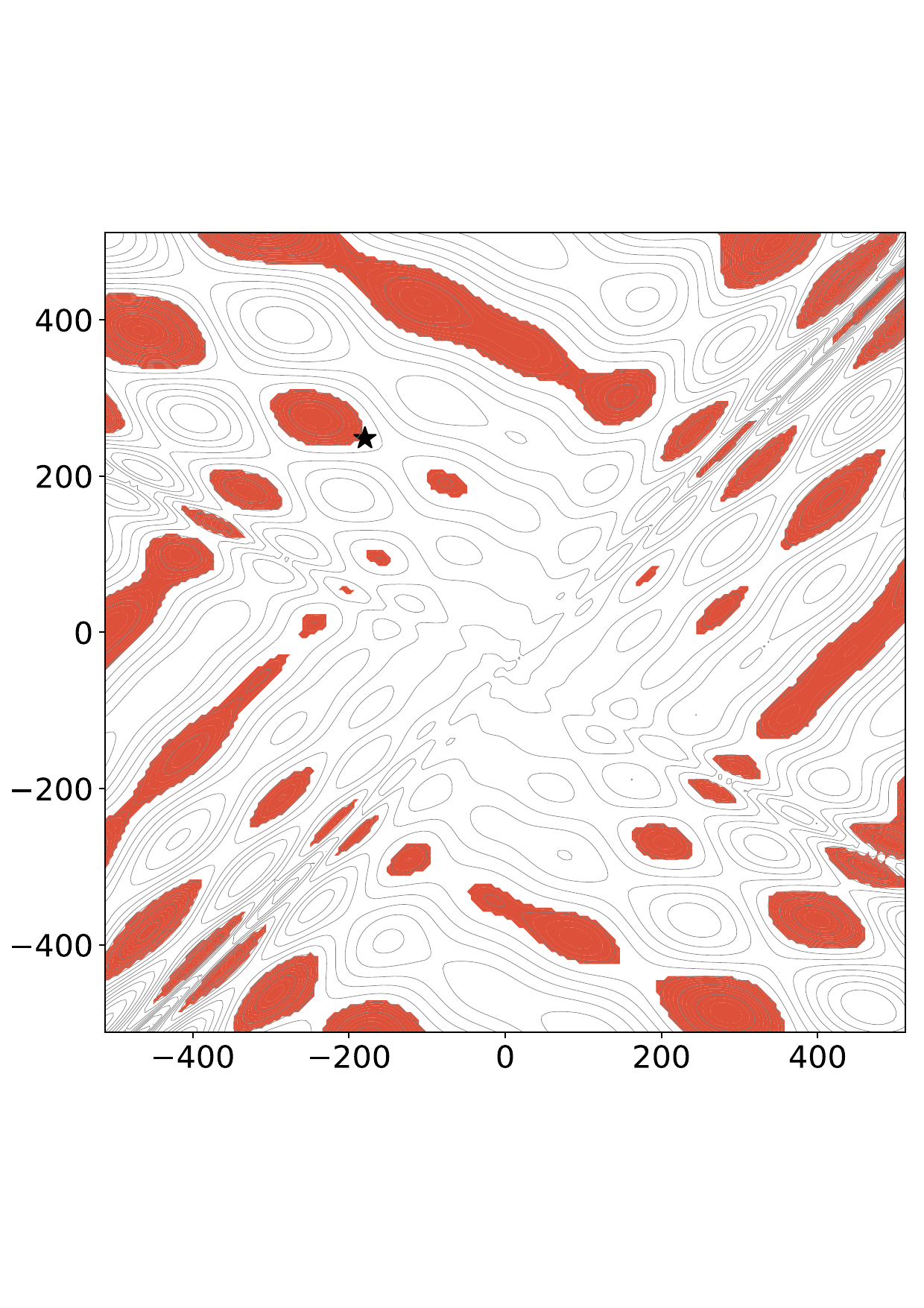}}
\vspace{-2.3cm}\\
\subfloat{\includegraphics[scale=0.265]{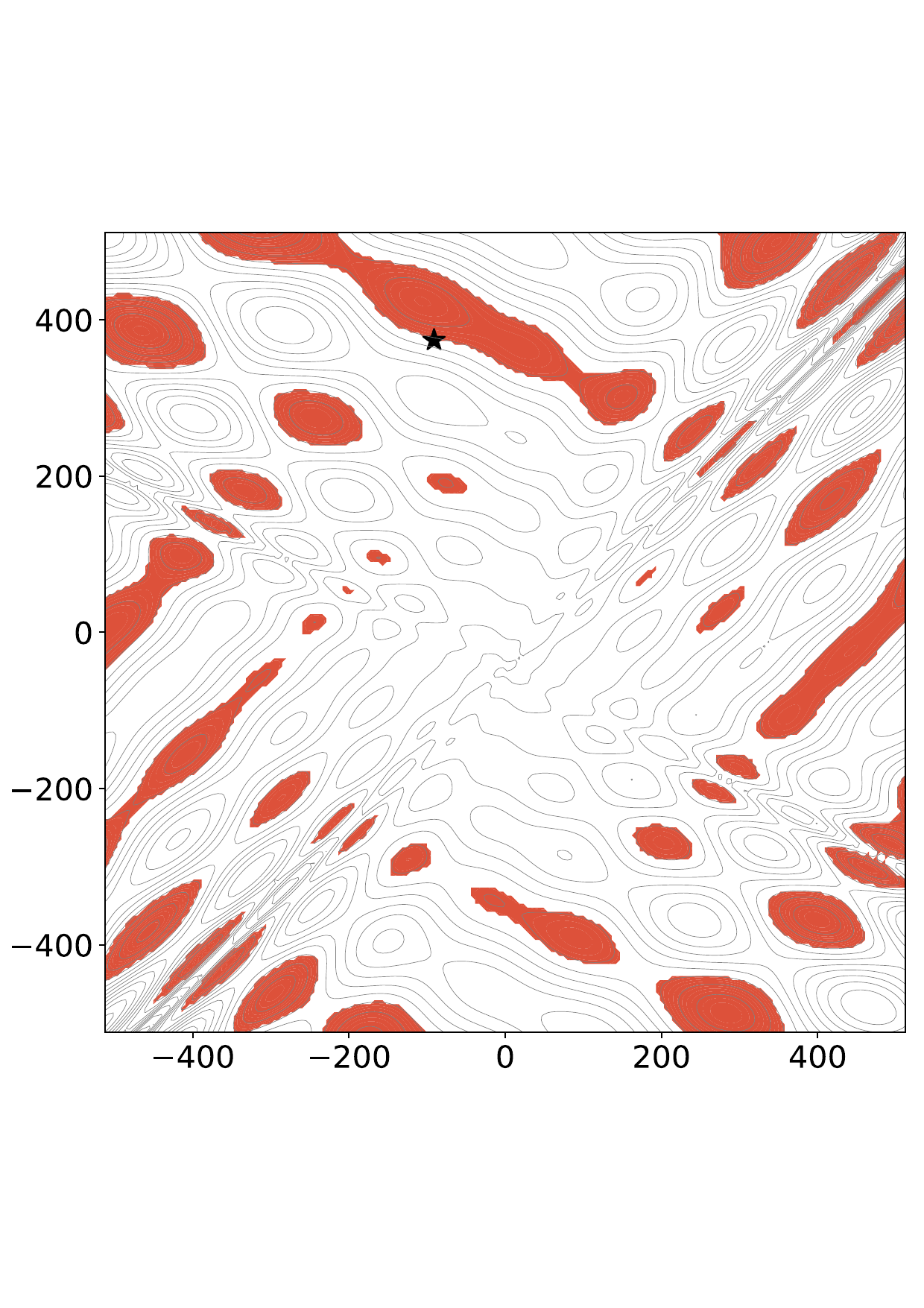}}
\hspace{0.1cm}
\subfloat{\includegraphics[scale=0.265]{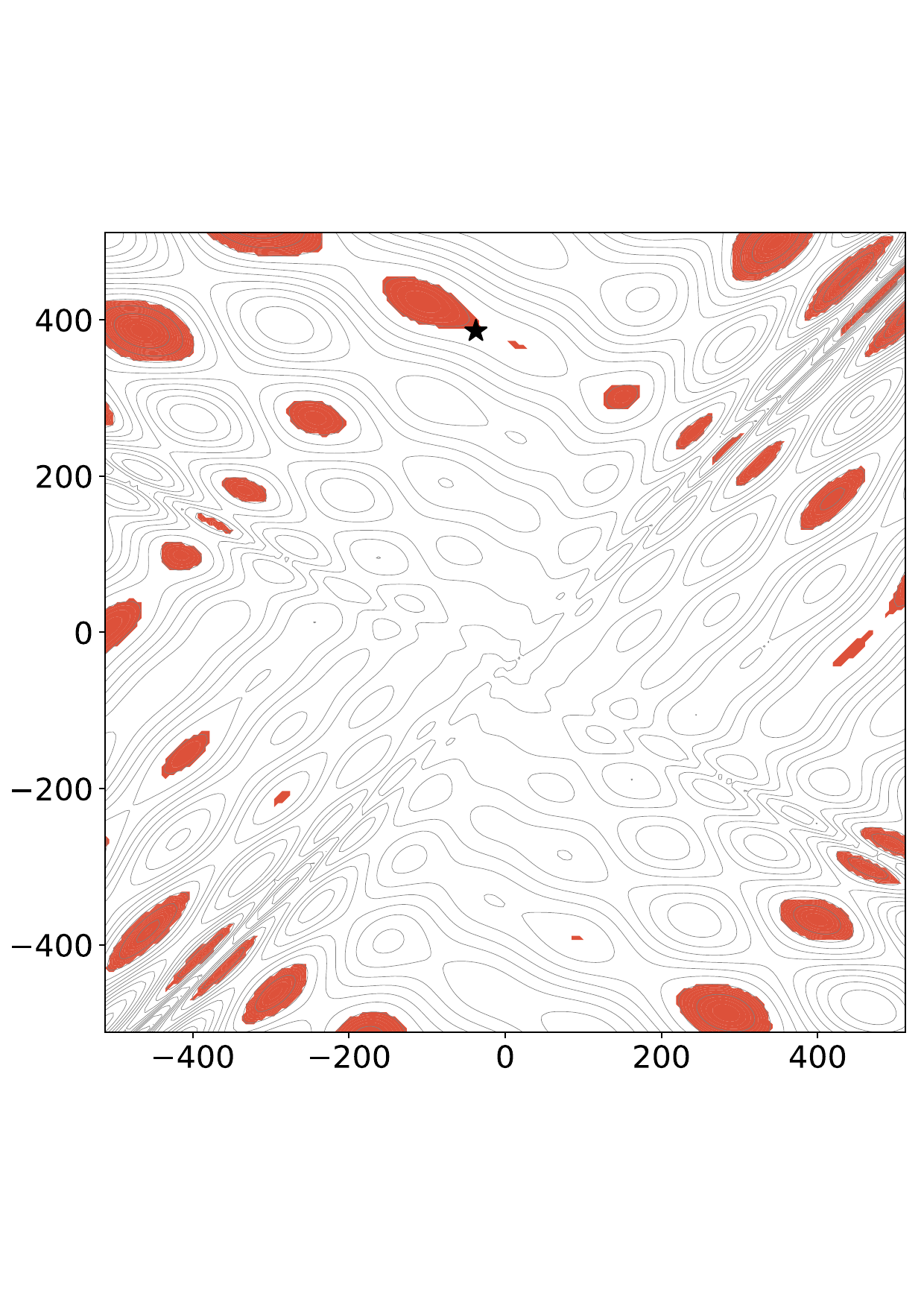}}
\hspace{0.1cm}
\subfloat{\includegraphics[scale=0.265]{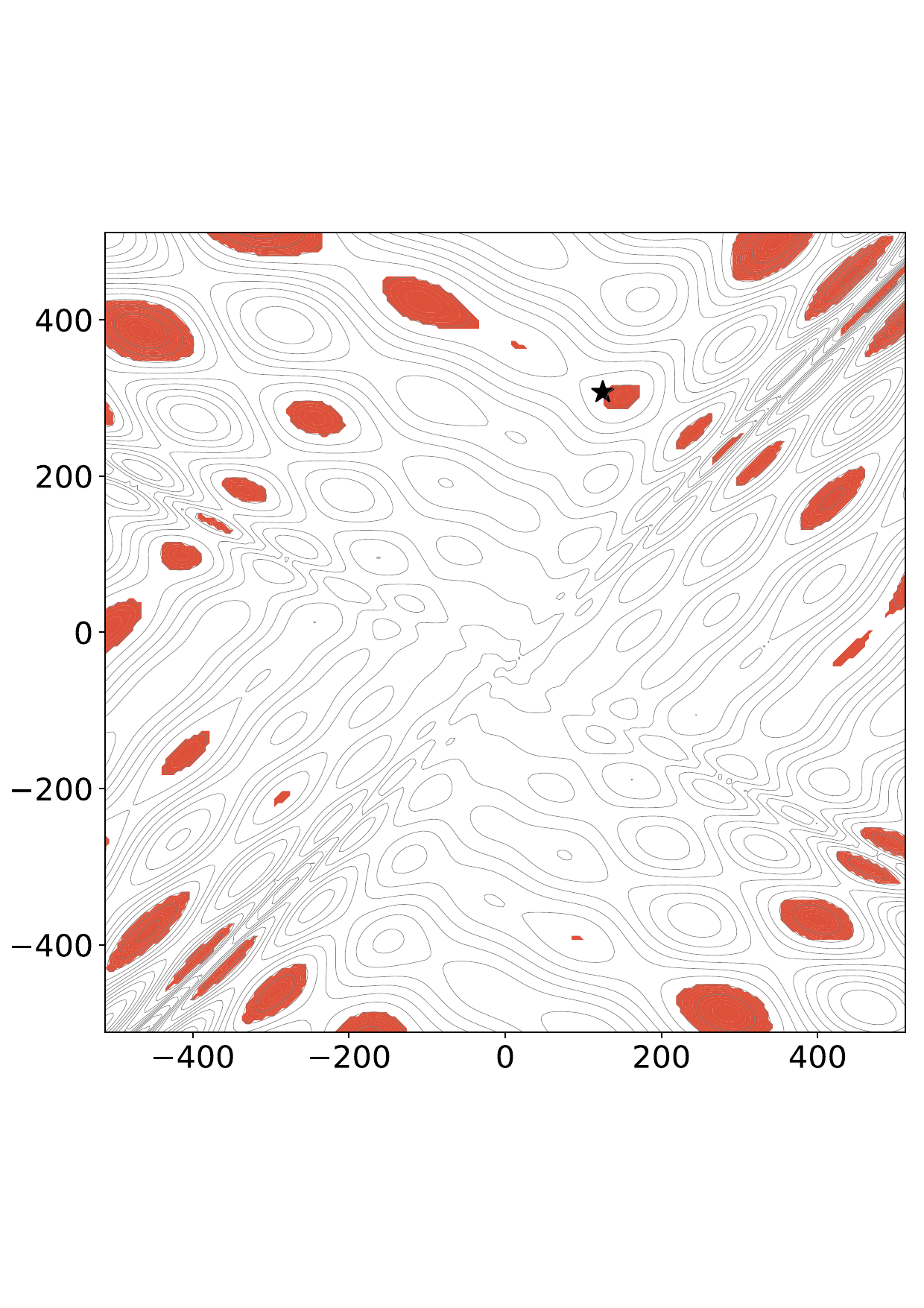}}
\vspace{-1.1cm}
\caption{Given a trajectory $(X_n)_{n=0,1,\dots}$ of the MSS started at $X_0=(-200,180)$, each subfigure displays the point $X_n$ (starred marker) and corresponding sublevel set $S(X_n)$ (in red) for $n=66,67,83$ (first row) and $n=84,107,108$ (second row). There are a total of $54$ skipping moves in this trajectory, which corresponds to $36.0\%$ of the moves. The MSS uses the Gaussian distribution $\mathcal{N}(\bm{0},2\cdot I)$ as proposal density and a deterministic halting index $\mathcal K = 150$.}
\label{fig:trajectoryMSK}
\end{figure*}

While outside the scope of our theoretical analysis, a variation on Algorithm \ref{alg:skip} is one in which the support $A$ is not constant. In particular, defining the level sets $S(X_n) = \{  \bm{x} \in \RR^d ~:~ f(\bm{x}) \leq f(X_n) \}$, a \textit{monotonic skipping sampler (MSS)} may be defined in which the support at the $n$-th step of Algorithm \ref{alg:skip} is $A_n := S(X_n) \cap D$ (setting $A_0 := D$), and the target density $\pi=\pi_n$ is uniform on $A_n$. That is, only downward moves (Markov chain transitions with $f(X_{n+1}) \leq f(X_n)$) are accepted. 
By construction we have $X_n \in A_n$ for each $n \in \mathbb{N}$. Also, since the random subsets $\{A_n\}_{n=1\dots,m}$ are themselves decreasing with $A_{n+1} \subseteq A_{n}$ for every $n$, they contain progressively fewer non-global minima in addition to the global minima of the function $f$.
In common with the skipping sampler where the support $A$ is fixed, the $n$-th step of the MSS requires no information about the sublevel set $S(X_n)$, just the ability to check whether the proposal $Z$ lies in $A_n$. 

To illustrate a trajectory of the MSS, take $f$ to be the so-called \textit{eggholder function} in dimension $d=2$, i.e.
\begin{align*}
	f_\textrm{eggholder}(\bm{x}) \, := \, &-x_1 \sin \left(\sqrt{| x_1-x_2-47| }\right) -(x_2+47) \sin \left(\sqrt{\left| \frac{x_1}{2}+x_2+47\right| }\right),
\end{align*}
an optimisation test function often used in the literature~\cite{Jamil2013}, with $D = [-512,512]^2$. Figure~\ref{fig:trajectoryMSK} shows some snapshots from a trajectory of the MSS, also indicating the progressively shrinking sublevel sets $A_n = S(X_n) \cap D$. In this subsample the state of the chain (starred marker) is seen to jump four times between different connected components of the sublevel sets (in the subfigures for $n=67$, $84$, and $108$), which happens by means of the skipping mechanism.

In Sections \ref{sub:multistart} and \ref{sub:optimisation} we provide numerical examples of performance improvements achieved when the MSS is used as a subroutine in the multistart and basin-hopping optimisation procedures respectively.

\subsubsection{Augmented multistart method}
\label{sub:multistart}

Given a nonconvex optimisation problem of the form~\eqref{eq:opt} with possibly several local minima, a classical strategy to find its global minimum is to restart the local optimisation method of choice at several different points. The multistart method produces the desired number $N$ of initial points by sampling them uniformly at random in $\prod_{i=1}^d [l_i,u_i]$.

Note that in the above setup, $f$ may be set equal to positive infinity outside an arbitrary constraint set. If the set $f^{-1}(\mathbb{R})$ of feasible points has a low volume compared to $D$ then many of the randomly sampled points may lie outside it, making this multistart initialisation procedure inefficient. In this case, recalling the remark on initialisation of Algorithm \ref{alg:skip} from Section \ref{sec:gs}, the MSS is capable of accelerating the search for feasible starting points $\bm{x} \in f^{-1}(\mathbb{R})$.

\FloatBarrier
Equally, sampling starting points uniformly at random may not be helpful if the basin of attraction of the global minimum has low volume. We can mitigate both of these issues by ``improving'' each of the points proposed by the multistart method as follows. Assume $N$ initial points $X^{(1)}_0,\dots,X^{(N)}_0$ have been sampled uniformly at random in $\prod_{i=1}^d [l_i,u_i]$, \textit{which need not be feasible}. For each $i=1,\dots,N$, a Markov chain of length $m$ may be generated using the MSS started at $X^{(i)}_0$, returning $X^{(i)}_m$. Algorithm~\ref{alg:slice} summarises in pseudo-code this MSS-augmented multistart method. By monotonicity, the augmented multistart procedure results in a greater proportion of feasible points, while each initially feasible point is improved.
\begin{table*}[!ht]
\centering
\begin{tabular}{p{4.2cm}|C{3.8cm}|C{3.8cm}|C{3.8cm}}
Metric & Multistart method & Multistart method augmented with $m$ RWM steps & Multistart method augmented with $m$ MSS steps\\ 
\hline 
\hline
Fraction of proposed points in the basin of attraction of the global minimum $x^*$ & 0.004 & 0.008 & 0.657 \\ 
\hline
Euclidean distance from the global minimum $x^*$ & 741.85 (87.94, 1218.84) & 741.85 (87.94, 1218.84) & 0.0 (0.0, 977.869) \\ 
\hline
Gap between the optimum $f(x^*)$ and the objective value of the proposed point & 465.68 (24.30, 848.49) & 465.68 (24.30, 848.49) & 0.0 (0.0, 70.69) \\ 
\hline
Number of function evaluations per run & 36 (21, 84) & 398 (325, 1909)  & 61527 (7805, 133470) \\ 
\hline 
Running time (sec/run) & 0.0021 & 0.0441 & 0.4577 \\ 
\end{tabular}
\caption{Comparison of the quality of the starting points returned by the three variants of the multistart method when solving the optimisation problem~\eqref{eq:eggholder_basic}. The results are averaged over $N=1000$ samples. The RWM and MSS variants both use trajectories of length $m=100$ and the Gaussian proposal density $\mathcal{N}(\bm{0},2\cdot I)$. The halting index for MSS was taken to be deterministic and equal to $\mathcal K=200$. The acceptance probabilities for the RWM are evaluated w.r.t.~the target distribution~\eqref{eq:gibbsdensity} with $T=1.0$. For three of the metrics in the table, we report the median value and, in parenthesis, the 2.5 and 97.5 percentiles.} \label{fig:table1}
\end{table*}
\begin{algorithm}[!ht]
	\SetAlgoLined
	\SetKwInOut{Input}{Input}\SetKwInOut{Output}{Output}
	\Input{The number $N$ of initial points and the desired length $m$ of MSS trajectories}
	\BlankLine
	\DontPrintSemicolon
	Generate $N$ points uniformly at random $X^{(1)}_0,\dots,X^{(N)}_0$ in $\prod_{i=1}^d [l_i,u_i]$; 
	
	\For{$i = 1$ \KwTo $N$}{
    	Starting at $X^{(i)}_0$ generate a trajectory of length $m$ using the MSS
    }
	\KwRet{\textup{the endpoints of the MSS trajectories} $X^{(1)}_m,\dots,X^{(N)}_m$}.\;
	\caption{MSS-augmented multistart method}\label{alg:slice}
\end{algorithm}
To illustrate the potential of the MSS-augmented multistart method, we present an example again using the eggholder function. We first consider the unconstrained optimisation problem 
\begin{align} 
	\min \quad  f_\textrm{eggholder}(\bm{x}) \quad \text{ s.t. } \, \bm{x} \in [-512,512]^2, \label{eq:eggholder_basic}
\end{align}
which has the optimal solution $\bm{x}^*=(512,404.2319)$, attaining the value $f_\textrm{eggholder}(\bm{x}^*)=-959.6407$. 
Averaging over $N=1000$ runs, in Table~\ref{fig:table1} we summarise the ``goodness'' of the $N$ starting points given by the following three methods:
\begin{itemize}
	\item[(i)] multistart method, i.e., initial points uniformly distributed on $[-512,512]^2$;
	\item[(ii)] the initial points obtained in (i) are evolved for $m=100$ steps with a RWM with Gaussian proposals with covariance matrix $2 \cdot I$ and the Boltzmann distribution $\pi$ in~\eqref{eq:gibbsdensity} with $T=1.0$ as target distribution;
	\item[(iii)] the initial points obtained in (i) are evolved for $m=100$ steps with the MSS using a deterministic halting index $\mathcal K = 200$ and the same Gaussian proposal density as in (ii).
\end{itemize}

The MSS augumented multistart method effectively biases the initial points towards the global minimum $x^*$, bringing $65.7\%$ of them in the correct basin of attraction, although at the expense of more function evaluations than the other two methods.

\subsubsection{Skipping sampler as basin-hopping subroutine}
\label{sub:optimisation}

\begin{table*}[!ht]
\centering
\begin{tabular}{p{4.5cm}|C{3.75cm}|C{3.75cm}|C{3.75cm}}
Metric & Basin-hopping (with uniform displacement as subroutine) & Monotonic sequence basin-hopping (with uniform displacement as subroutine) & Basin-hopping with skipping \\ 
\hline 
\hline
Fraction of proposed points in the basin of attraction of the global minimum $x^*$ & 0.022 & 0.017 & 0.544 \\ 
\hline
Euclidean distance from global minimum $x^*$ & 741.85 (87.94, 1248.15) & 752.827 (87.94, 1248.15) & 0.0 (0.0, 41.23) \\ 
\hline
Gap between the optimum $f(x^*)$ and the objective value of the proposed point & 419.21 (24.30, 835.76) & 402.03 (24.30, 835.85) & 0.0 (0.0, 2.72) \\
\hline
Number of function evaluations & 2004 (1776, 3432) & 348 (324, 5085) & 20370 (19957, 23702) \\ 
\hline 
Running time (sec/run) & 0.123 & 0.065 & 0.333 \\ 
\end{tabular}
\caption{Performance comparison of the three basin-hopping variants using different subroutines averaged over $N=1000$ samples for trajectories of length $m=100$. The underlying proposal used for the MSS subroutine is a standard Gaussian distribution $\mathcal{N}(\bm{0},I)$ and the uniform displacement of the other two methods is scaled to have the same standard deviation. The halting index for MSS was taken to be deterministic and equal to $\mathcal K=200$. The acceptance probabilities for the basin-hopping methods are evaluated w.r.t.~the target distribution~\eqref{eq:gibbsdensity} with $T=1.0$. For three of the metrics in the table, we report the median value and, in parenthesis, the 2.5 and 97.5 percentiles.} \label{fig:table_opt}
\end{table*}

Besides improving the multistart method, the skipping sampler can also be used to improve stochastic techniques for non-convex optimisation, in particular the so-called \textit{basin-hopping method}. In this subsection we explore this novel idea, although the implementation details and a systematic comparison with other global optimisation routines are left for future work. 

Basin-hopping is a global optimisation technique proposed in~\cite{Wales1997}, which at each stage combines a random perturbation of the current point, local optimisation, and an acceptance/rejection step. The random perturbation consists of i.i.d.~uniform simultaneous perturbations in each of the coordinates, that is, a random walk step.
The stopping criterion for this iterative procedure is often a maximum number of function evaluations, or when no improvement is observed for a certain number of consecutive iterations. 

The random walk step may be replaced by a step from the MSS. That is, at step $n$, given the current point $X_{n-1}$ we first sample a new point $Y_{n}$ from the sublevel set $S(X_{n-1}) \cap D$ using MSS and then perform a local optimisation procedure starting from $Y_{n}$ to obtain a new point $X_{n}$. This idea is summarised in the pseudo-code presented in Algorithm~\ref{alg:optimize}.

\begin{algorithm}[!ht]
	\SetAlgoLined
	\SetKwInOut{Input}{Input}\SetKwInOut{Output}{Output}
	\Input{An initial point $X_0 \in \prod_{i=1}^d [l_i,u_i]$, not necessarily feasible}
	\BlankLine
	\DontPrintSemicolon
	Set $n=1$;\\
	\While{basin-hopping stopping criterion is not satisfied}{
	    Perform a single step of the MSS started at $X_{n-1}$, obtaining a new point $Y_{n}$;\\
		Perform a local optimisation step started at $Y_{n}$ to obtain a new point $X_n$;\\	
    	Increment the index $n$ by one;  
    }
	\KwRet{\textup{The last point} $X_n$}.\;
	\caption{Basin-hopping with skipping}\label{alg:optimize}
\end{algorithm}

The MSS variant of the basin-hopping method is related to the monotonic sequence basin-hopping (MSBH) proposed in~\cite{Leary2000}, which also accepts only new points in $S(Y_{n-1}) \cap D$. However MSBH uses only local uniform perturbations and thus faces the same exploration challenges as RWM when $S(Y_{n-1}) \cap D$ is disconnected. 

In Table~\ref{fig:table_opt}, we compare the performance of basin-hopping and of the MSBH with the proposed basin-hopping with skipping.
The MSS subroutine leads to $54.4\%$ of the initial (uniformly distributed) points converging to the basin of attraction of the global minimum $x^*$. This sharp improvement with respect to basin-hopping (which has a corresponding success rate of only $2.2\%$) only requires ten times more function evaluations.
In~\cite{Goodridge2021} we present additional performance metrics for the basin-hopping method with skipping and more extensive numerical results over a large collection of test functions.

\section{Proofs}\label{sec:proof}

\subsection{Proof of Theorem~\ref{thm:ergodic}}
For each $x \in \textrm{supp}(\pi)$ let $\PP_x$ be a probability measure carrying all  random variables used in Algorithm \ref{alg:skip}, such that $X_0 = x$ almost surely under $\PP_x$. Denote respectively by $\{Y_m\}_{m \geq 1}$ and $\{X_n\}_{n \geq 1}$ the proposals generated by the $\MH(\pi,q)$ algorithm and the Markov chain returned by the algorithm. Writing $\cA_n:=\bigcap_{i=1}^n \{X_i=Y_i \}$ for the event that the first $n$ proposals of $\MH(\pi,q)$ are all accepted, we have  

\begin{lemma}\label{lem:nonz}
If the chain $\mathrm{MH}(\pi,q)$ restricted to $\mathrm{supp}(\pi)$ is $\pi$-irreducible then $\PP_x\left(\cA_{m}\right)>0$ for all $x \in \textrm{supp}(\pi)$ and all $m \geq 1$. 
\end{lemma}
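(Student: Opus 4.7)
The plan is to proceed by induction on $m$, leveraging the Markov property. The base case will use $\pi$-irreducibility together with a contradiction argument, and the inductive step will reduce $\cA_{m+1}$ to a first-step conditioning against $\cA_m$.

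For the base case $m=1$, I would argue by contradiction. Suppose $\PP_x(\cA_1)=0$ for some $x \in A$. Since $\cA_1^c = \{X_1 = x\}$, this forces $\PP_x(X_1 = x)=1$, and by iterating the Markov property, $\PP_x(X_n = x \text{ for all } n \in \NN) = 1$. Now choose any $D \subset A$ with $\pi(D)>0$ and $x \notin D$; such a $D$ exists because $\pi$ is a density with respect to Lebesgue measure on $\RR^d$, so $\pi(\{x\})=0$ and hence $D := A \setminus \{x\}$ satisfies $\pi(D) = 1 > 0$. But then $\PP_x\!\left[\bigcup_n \{X_n \in D\}\right] = 0$, contradicting $\pi$-irreducibility of $\MH(\pi,q)$ on $\mathrm{supp}(\pi)$.

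For the inductive step, assume $\PP_y(\cA_m) > 0$ for every $y \in A$. Conditioning on the first transition via the Markov property,
\[
    \PP_x(\cA_{m+1}) \;=\; \E_x\!\left[\mathbf{1}_{\cA_1}\,\PP_{X_1}(\cA_m)\right] \;=\; \int_{\RR^d} q(y-x)\,\alpha(x,y)\,\PP_y(\cA_m)\,dy.
\]
Since $\alpha(x,y)>0$ requires $\pi(y)>0$, the integrand vanishes outside $A$. By the base case, $\int q(y-x)\alpha(x,y)\,dy = \PP_x(\cA_1) > 0$, so the factor $q(y-x)\alpha(x,y)$ is strictly positive on a subset of $A$ of positive Lebesgue measure; combined with the inductive hypothesis $\PP_y(\cA_m) > 0$ for $y \in A$, the integral is strictly positive.

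The key obstacle is the base case, where one has to convert a \emph{multi-step} reachability statement ($\pi$-irreducibility) into a \emph{single-step} acceptance statement. The crucial observation that makes this work is that if every proposal from $x$ is rejected with probability one, the chain is frozen at $x$ forever; this is incompatible with reaching any nontrivial subset of $A$, and nontrivial subsets exist precisely because $\pi$ is absolutely continuous, hence non-atomic. Once this is in place, the inductive step is a routine application of the Markov property.
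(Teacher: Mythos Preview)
Your proof is correct and follows essentially the same approach as the paper's: both argue the base case by observing that $\PP_x(\cA_1)=0$ freezes the chain at $x$, contradicting $\pi$-irreducibility, and then propagate to general $m$ via the Markov property. The only cosmetic difference is that the paper decomposes $\cA_n$ as $\cA_{n-1}$ followed by a final acceptance (minimal-counterexample framing), whereas you decompose $\cA_{m+1}$ as a first acceptance followed by $\cA_m$; both are equivalent inductions.
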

\begin{proof}
Fixing $x \in \textrm{supp}(\pi)$ and supposing otherwise for a contradiction, let $n$ be the smallest integer such that $\PP_x\left(\cA_{n}\right)=0$. Clearly $n \geq 2$, since otherwise, $\PP_x-$almost surely we have $X_k=X_0$ for all $k \geq 1$, contradicting the assumption of $\pi$-irreducibility. Therefore $\PP_x\left(\cA_{n-1}\right)>0$ and we may write $p$ for the density of $X_{n-1}$ conditional on the event $\cA_{n-1}$. Then by the Markov property we have
\begin{align*}
	0 & \quad = \quad \PP_x\left(\cA_{n-1}\right) \PP_x\left(\cA_{n}|\cA_{n-1}\right) \quad = \quad \PP_x\left(\cA_{n-1}\right) \int_{\mathrm{supp}(p)} p(y) \, \PP_y(\cA_1) \, dy ,
\end{align*}
so that $\PP_y(\cA_1)=0$ for some $y \in \mathrm{supp}(p)$. Arguing as above, this contradicts the assumption of $\pi$-irreducibility.
\end{proof}

Denote the Markov kernels of the chains generated by $\MH(\pi,q)$ and $\MH(\pi,q_\cK)$  by $P$ and $P_\cK$ respectively. Also let $\{X'_n\}_{n \geq 1}$ be the jump chain associated with $X$ (that is, the subsequence of $\{X_n\}_{n \geq 1}$ given by excluding all $X_m$ which satisfy $X_m = X_{m-1}$). 

\begin{lemma}\label{lem:irredaux}
For all $x\in \K=\mathrm{supp}(\pi)$, $n\in\NN$ and all $B \subset \K$ the following inequality holds:
\begin{align}
	P_\cK^n(x,B) & \quad \geq \quad \PP_x\left (\{X_n\in B\} \cap \cA_n\right)  \nonumber \\
	& \quad = \quad \PP_x\left (X_n\in B ~|~ \cA_n\right) \PP_x\left(\cA_n\right) \nonumber \\
	& \quad = \quad \PP_x\left (X'_n\in B ~|~ \cA_n \right)\PP_x\left(\cA_n\right). \label{eq:jump}
\end{align}
\end{lemma}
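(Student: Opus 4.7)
The plan is to prove the inequality $\gP^n(x,B) \geq \PP_x(\{X_n \in B\} \cap \cA_n)$ by induction on $n$, with Proposition~\ref{prop:m}(ii) as the key input, and then to read off the two subsequent equalities.

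For the base case $n=1$, I would write $\gP(x,B) \geq \int_B \alpha(x,z)\, \gq(x,z)\, dz$ (dropping the point mass at $x$ loses only a non-negative term) and use Proposition~\ref{prop:m}(ii) to bound this below by $\int_B \alpha(x,z)\, q(z-x)\, dz$, which is exactly $\PP_x(\{X_1 \in B\} \cap \cA_1)$.

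For the inductive step, assume the bound at level $n$ uniformly in $y \in \K$, and apply Chapman-Kolmogorov to $\gP^{n+1}$. Dropping the $\delta_x$-atom of $\gP(x,\cdot)$ and dominating the first-step density via Proposition~\ref{prop:m}(ii) gives
\[
\gP^{n+1}(x,B) \;\geq\; \int_\K \alpha(x,y)\, q(y-x)\, \gP^n(y,B)\, dy \;\geq\; \int_\K \alpha(x,y)\, q(y-x)\, \PP_y(\{X_n \in B\} \cap \cA_n)\, dy,
\]
and the Markov property for $\MH(\pi,q)$ identifies the right-hand side with $\PP_x(\{X_{n+1} \in B\} \cap \cA_{n+1})$. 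Restricting integration to $\K$ is justified since $\pi(y)=0$, hence $\alpha(x,y)=0$, for $y \in \Kc$.

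The first equality in the lemma is Bayes' rule, legitimate because $\PP_x(\cA_n) > 0$ by the preceding Lemma~\ref{lem:nonz}. The second exploits the fact that on $\cA_n$ the chain $\{X_i\}_{i\leq n}$ coincides with the first $n$ entries of its jump chain $\{X'_i\}$, combined with the standard factorisation of the $\MH$ trajectory into jump-chain transitions and (conditionally) geometric holding times. The main obstacle lies in the inductive step, specifically in handling the $\delta_x$ atom of $\gP$ cleanly: Proposition~\ref{prop:m}(ii) is only a pointwise density inequality off the diagonal, so some care is needed to convert it into an inequality of the full kernels and iterate through Chapman-Kolmogorov.
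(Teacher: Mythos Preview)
Your proposal is correct and follows essentially the same route as the paper: induction on $n$, with the base case and inductive step both obtained by dropping the rejection atom of $\gP(x,\cdot)$ and invoking Proposition~\ref{prop:m}(ii) to dominate $\gq$ below by $q$, then applying the Markov property. The paper treats the equalities in a single line (``by definition of the jump chain''), so your slightly more detailed account is already more than it gives; note, though, that Lemma~\ref{lem:nonz} is not a hypothesis of this lemma, so the first equality should really be read as trivially $0=0$ when $\PP_x(\cA_n)=0$ rather than relying on irreducibility. Your flagged ``main obstacle'' concerning the $\delta_x$ atom is not a genuine difficulty: since you only need a lower bound, discarding the non-negative atom is immediate and iterates through Chapman--Kolmogorov without any subtlety.
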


\begin{proof} Note first that the last equality in \eqref{eq:jump} follows by definition of the jump chain. 
We will prove the inequality in \eqref{eq:jump} by induction on $n$. Since $\mathrm{supp}(\pi)=\K$, Proposition~\ref{prop:m} (ii) gives 
\begin{align*}
	P_{\cK}(x,B) & \quad \geq \quad \int_B\alpha(x,z)q_\mathcal{K}(x,z)dz \\& \quad \geq \quad \int_B\alpha(x,z)q(z-x)dz \\
	& \quad = \quad \PP_x\left (\{X_1\in B\} \cap \cA_1\right).
\end{align*}
Assume now the statement holds for some $n\in\NN$ and let us prove it for $n+1$. We argue using the induction hypothesis and Proposition~\ref{prop:m} (ii) again:
\medmuskip=1mu
\thinmuskip=1mu
\thickmuskip=1mu
\begin{align*}
P_{ \cK}^{n+1}(x,B) \quad
	& \quad = \quad \int_{\K}P_{\cK}^n(z,B)P_{ \cK}(x,dz)\\
	& \quad geq \quad \int_{\K}P_{ \cK}^n(z,B)\alpha(x,z)q_\mathcal{K}(x,z)\,dz\\
	& \quad \geq \quad \int_{\K}P_{ \cK}^n(z,B)\alpha(x,z)q(z-x)\,dz\\
	& \quad \geq \quad \int_{\K}\PP_z\left (\{X_n\in B\} \cap \cA_n\right)\alpha(x,z)q(z-x)\,dz\\
	&\quad = \quad \PP_x\left (\{X_{n+1}\in B\} \cap \cA_{n+1}\right)\,. \qedhere
\end{align*}
\thinmuskip=3mu
\medmuskip=4mu
\thickmuskip=5mu
\end{proof}

\begin{proof}[Proof of Theorem \ref{thm:ergodic}]
Take $B\subseteq \K=\mathrm{supp}(\pi)$ such that $\pi(B)>0$, $x\in\K$ and let $\{X_n\}_{n \geq 1}$ be $\MH(\pi,q)$ started at $X_0=x$. Since $\MH(\pi,q)$ is $\pi$-irreducible there exists an integer $n\in\NN$ such that $\mathbb{P}_x\left(X_{n}\in B\right)>0$. Let $S_n$ be the number of rejections which occur in the generation of $\{X_m\}_{1 \leq m \leq n}$. Then 
$$
	0 \quad<\quad \mathbb{P}_x\left(X_{n}\in B\right) 
	\quad=\quad
	 \sum_{i=0}^n \mathbb{P}_x\left(X_{n}\in B, S_n=i\right).
$$
For some $j \in \{1,\ldots,n\}$ we therefore have 
$$
   \mathbb{P}_x\left(X_{n}\in B, S_n=j\right)>0.
$$
Consequently
$$
    \mathbb{P}_x\left(X_{n-j}'\in B|\mathcal{A}_{n-j}\right)>0,
$$
so that
\begin{align*}
	P_\cK^{n-j}(x,B) \geq \mathbb{P}_x\left(X'_{n-j}\in B ~|~ \cA_{n-j} \right) \PP_x\left(\cA_{n-j}\right)
	> 0\,,
\end{align*}
where we used the above together with Lemma~\ref{lem:irredaux} and Lemma \ref{lem:nonz}.
The skipping chain $\MH(\pi,q_\mathcal{K})$ is therefore $\pi$-irreducible, and thus is Harris recurrent by \cite[Corollary~2]{tierney}. Furthermore, \cite[Theorem~10.0.1]{tweedie} yields that $\pi$ is its unique invariant probability measure. Finally, the SLLN holds for all $\pi$-integrable functions by Harris recurrence and \cite[Theorem~17.1.7]{tweedie}. 
\end{proof}

\subsection{Proof of Theorem~\ref{thm:CLT}}
To prove Theorem~\ref{thm:CLT}, we will make use of the following lemma, whose proof is omitted.
\begin{lemma}[Integration with respect to a symmetric joint density] 
\label{lem:integration}
Consider a symmetric density $\Delta: \RR^d\times \RR^d \to [0,+\infty)$ and a subset $B \subseteq \RR^d$. For every $f \in L^2(\Delta)$ the following identity holds:
\medmuskip=-1mu
\thinmuskip=-1mu
\thickmuskip=-1mu
$$
	\int_{B} \int_{B} \frac{f(x)^2+f(y)^2}{2} \Delta(x,y) dy dx \quad
	= \quad \int_{B}  f(x)^2  \left (\int_{B} \Delta(x,y) dy \right ) dx\,.
$$
\thinmuskip=3mu
\medmuskip=4mu
\thickmuskip=5mu
\end{lemma}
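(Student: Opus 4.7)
The plan is to exploit the symmetry of $\Delta$ together with Fubini--Tonelli, which is essentially the only tool available. First I would split the integrand on the left-hand side as
\[
\frac{f(x)^2+f(y)^2}{2}\Delta(x,y) \;=\; \tfrac12 f(x)^2\Delta(x,y) + \tfrac12 f(y)^2\Delta(x,y),
\]
so that the left-hand side decomposes into two double integrals over $B\times B$. The claim will reduce to showing that these two double integrals are equal, since then each equals half of the right-hand side and they sum to the right-hand side.

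Next I would handle the second piece by a change of names of the dummy variables $x\leftrightarrow y$, permissible by Fubini--Tonelli applied to the non-negative integrand (the integrability of $f(x)^2\Delta(x,y)$ and $f(y)^2\Delta(x,y)$ on $B\times B$ follows from $f\in L^2(\Delta)$, since $\int\!\!\int f(x)^2\Delta(x,y)\,dy\,dx\le \|f\|_{L^2(\Delta)}^2 < \infty$ by symmetry of $\Delta$ reducing one of the inner integrals to the marginal). After renaming, the second double integral becomes
\[
\tfrac12\int_B\int_B f(x)^2\,\Delta(y,x)\,dx\,dy,
\]
and invoking the hypothesis $\Delta(y,x)=\Delta(x,y)$ together with another application of Fubini shows it coincides with the first double integral.

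Finally I would collect terms: the left-hand side equals $\int_B\int_B f(x)^2\Delta(x,y)\,dy\,dx$, and pulling $f(x)^2$ outside the inner $dy$-integral produces exactly the right-hand side. There is no real obstacle here beyond being careful about integrability in order to justify Fubini; the only subtlety worth flagging is that $f\in L^2(\Delta)$ should be interpreted as $\int\!\!\int |f(x)|^2\Delta(x,y)\,dx\,dy<\infty$ (or equivalently for $f(y)^2$, by symmetry), which is precisely what licenses the swap of order of integration and the relabelling of variables on the non-negative integrand.
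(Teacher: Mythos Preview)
Your proof is correct and is the natural argument: split the integrand, relabel variables in the second piece, invoke the symmetry $\Delta(x,y)=\Delta(y,x)$, and apply Tonelli (the integrands are non-negative, so no integrability hypothesis is even needed to justify the swap; the condition $f\in L^2(\Delta)$ only guarantees finiteness of both sides). The paper omits the proof of this lemma entirely, so there is nothing to compare against.
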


\begin{proof}[Proof of Theorem~\ref{thm:CLT}]
(i) For any $f \in L^2(\pi)$ the desired inequality $\langle P_\mathcal{K} f,f\rangle\leq \langle P f,f\rangle$ can be written more explicitly as 
\medmuskip=0mu
\thinmuskip=0mu
\thickmuskip=0mu
\begin{align*}
	& \int_{\RR^d}f(x) \Bigg ( \left(\int_{\RR^d}f(y)\alpha(x,y) ( q_\mathcal{K}(x,y) - q(y-x) )dy\right) + f(x)(r_\mathcal{K}(x) - r(x))\Bigg )\pi(x)dx \quad\leq\quad 0,
\end{align*}
\thinmuskip=3mu
\medmuskip=4mu
\thickmuskip=5mu
where we respectively denote by $r(x)$ and $r_\mathcal{K}(x)$ the rejection probabilities starting at point $x$ of $\MH(\rho,q)$ and $\MH(\rho,q_\mathcal{K})$, i.e., $r(x):=1-\int_{\RR^d}\alpha(x,y)q(y-x)dy$ and analogously for $r_\mathcal{K}(x)$.
The above inequality holds provided that we establish the following one:
\medmuskip=0mu
\thinmuskip=0mu
\thickmuskip=0mu
\begin{multline} \label{eq:l2ineq_step}
	\int_{\RR^d}f(x)\left(\int_{\RR^d}f(y)\alpha(x,y)\left(q_\mathcal{K}(x,y)-q(y-x)\right)dy\right)\pi(x)dx\quad \leq\quad \int_{\RR^d}f^2(x)(r(x)-r_\mathcal{K}(x))\pi(x)dx.
\end{multline}

Then considering the LHS of~\eqref{eq:l2ineq_step} and Proposition~\ref{prop:m} (ii) we have:
\medmuskip=0mu
\thinmuskip=0mu
\thickmuskip=0mu
\begin{align*}
&\int_{\RR^d}f(x)\left(\int_{\RR^d}f(y)\alpha(x,y)\left(q_\mathcal{K}(x,y)-q(y-x)\right)dy\right)\pi(x)dx
\\
& \quad = \quad \int_{\K}\int_{\K}f(y)f(x)\alpha(x,y)\pi(x)\left(q_\mathcal{K}(x,y)-q(y-x)\right)dydx
\\
&\quad \leq \quad \int_{\K}\int_{\K}\frac{f^2(y)+f^2(x)}{2}\alpha(x,y)\pi(x)\left(q_\mathcal{K}(x,y)-q(y-x)\right)dydx\\
&\quad \stackrel{(\star)}{=} \quad \int_{\K}\int_{\K}  f(x)^2 \alpha(x,y)\pi(x)\left(q_\mathcal{K}(x,y)-q(y-x)\right)dydx\\
&\quad = \quad \int_{\K} f(x)^2 \left(\int_{\K}\alpha(x,y)\left(q_\mathcal{K}(x,y)-q(y-x)\right)dy\right)\pi(x)dx\\
&\quad = \quad \int_{\RR^d}f^2(x)(r(x)-r_\mathcal{K}(x))\pi(x)dx\,.
\end{align*}
\thinmuskip=3mu
\medmuskip=4mu
\thickmuskip=5mu
In this derivation we used (in order) the fact that $\alpha(x,y)=0$ for $y\in \Kc$ by definition of $\alpha$ and $\pi$ and the classical GM-QM inequality $2f(x)f(y)\leq f(x)^2+f(y)^2$. Furthermore, equality $(\star)$ holds thanks to Lemma~\ref{lem:integration} by taking $\Delta(x,y)= \alpha(x,y) (q_\mathcal{K}(x,y)-q(y-x) ) \pi(x)$ and $B = \K$. 
The property that $\Delta(x,y)=\Delta(y,x)$ for every $x,y \in \K$ readily follows by combining the following two identities that hold for every $x,y\in \K$:
\begin{align*}
	& \alpha(x,y)\pi(x)=\min(\pi(x),\pi(y))=\alpha(y,x)\pi(y), \quad \text{and} \quad q_\mathcal{K}(x,y)-q(y-x)=q_\mathcal{K}(y,x)-q(x-y).
\end{align*}
The first identity is an immediate consequence of the definition~\eqref{eq:ap2} of $\alpha$, while the second one follows from Assumption~\ref{ass:qSRW} and Proposition~\ref{prop:m} (i).\\

(ii) By (i) we have $\langle (I-\gP)f,f\rangle\geq \langle (I-P)f,f\rangle$ for all $f\in L^2(\pi)$. The proof follows by $\lambda_\mathcal{K}=\inf_{f\in \mathcal{M}}\langle (I-\gP)f,f\rangle\geq \inf_{f\in \mathcal{M}}\langle (I-P)f,f\rangle=\lambda$ where $\mathcal{M}=\{f\in L^2(\pi) ~:~ \pi(f^2)=1, \, \pi(f)=0\}$.\\

(iii) This follows by (i) and \cite[Theorem 6]{mira2009covariance}.
\end{proof}

\textbf{Acknowledgements:}
JM was supported by EPSRC grant EP/P002625/1, JV by EPSRC grants EP/N001974/1 and EP/R022100/1, and AZ by NWO Rubicon grant 680.50.1529. The authors wish to thank Wilfrid Kendall, Krzysztof {\L}atuszy{\'n}ski and Andrew Duncan for useful discussions, and the associate editor and two anonymous referees for their comments which helped improve the manuscript. The authors would like to thank the Isaac Newton Institute for Mathematical Sciences for support and hospitality during the programme ``Mathematics of Energy Systems'' when work on this paper was undertaken. This work was supported by EPSRC grant number EP/R014604/1.


\begin{thebibliography}{10}

\bibitem{andricioaei2001}
I.~Andricioaei, J.~Straub, and A.~Voter.
\newblock {Smart darting Monte Carlo}.
\newblock {\em The Journal of Chemical Physics}, 114(16):6994--7000, 2001.

\bibitem{Andrieu20}
C.~Andrieu, A.~Lee, and S.~Livingstone.
\newblock A general perspective on the {M}etropolis-{H}astings kernel.
\newblock {\em arXiv preprint arXiv:2012.14881}, 2020.

\bibitem{Dembo2010}
A.~Dembo and O.~Zeitouni.
\newblock {\em Large Deviations Techniques and Applications}.
\newblock Springer Berlin Heidelberg, 2010.

\bibitem{GilksRoberts1996}
W.~R. Gilks and G.~O. Roberts.
\newblock Strategies for improving {MCMC}.
\newblock {\em Markov chain Monte Carlo in practice}, 6:89--114, 1996.

\bibitem{Goodridge2021}
M.~{Goodridge}, J.~{Moriarty}, J.~{Vogrinc}, and A.~{Zocca}.
\newblock Hopping between distant basins.
\newblock {\em arXiv preprint arXiv:2108.05229}, 2021.

\bibitem{hastings}
W.~Hastings.
\newblock Monte {C}arlo sampling methods using {M}arkov chains and their
  applications.
\newblock {\em Biometrika}, 57(1):97--109, 1970.

\bibitem{Jain1993}
P.~Jain and A.~M. Agogino.
\newblock Global optimization using the multistart method.
\newblock {\em Journal of Mechanical Design}, 115(4):770--775, Dec. 1993.

\bibitem{Jamil2013}
M.~Jamil and X.~S. Yang.
\newblock A literature survey of benchmark functions for global optimisation
  problems.
\newblock {\em International Journal of Mathematical Modelling and Numerical
  Optimisation}, 4(2):150, 2013.

\bibitem{jarner}
S.~Jarner and E.~Hansen.
\newblock Geometric ergodicity of {M}etropolis algorithms.
\newblock {\em Stochastic Processes and their Applications}, 85(2):341--361,
  2000.

\bibitem{JarnerRoberts}
S.~Jarner and G.~Roberts.
\newblock Polynomial convergence rates of {M}arkov chains.
\newblock {\em The Annals of Applied Probability}, 12(1):224--247, 2002.

\bibitem{Kirkpatrick1983}
S.~Kirkpatrick, C.~D. Gelatt, and M.~P. Vecchi.
\newblock Optimization by simulated annealing.
\newblock {\em Science}, 220(4598):671--680, 1983.

\bibitem{lan2014}
S.~Lan, J.~Streets, and B.~Shahbaba.
\newblock {Wormhole Hamiltonian Monte Carlo}.
\newblock In {\em Proceedings of the 28th AAAI Conference on Artificial
  Intelligence}, pages 1953--1959, 2014.

\bibitem{Latuszynski2014}
K.~{\L}atuszy{\'n}ski and D.~Rudolf.
\newblock Convergence of hybrid slice sampling via spectral gap, 2014.

\bibitem{Leary2000}
R.~H. Leary.
\newblock {Global Optimization on Funneling Landscapes}.
\newblock {\em Journal of Global Optimization}, 18(4):367--383, 2000.

\bibitem{liu2000}
J.~Liu, F.~Liang, and W.~Wong.
\newblock {The multiple-try method and local optimization in Metropolis
  sampling}.
\newblock {\em Journal of the American Statistical Association},
  95(449):121--134, 2000.

\bibitem{Marti2003}
R.~Mart{\'{\i}}.
\newblock Multi-start methods.
\newblock In {\em Handbook of Metaheuristics}, pages 355--368. Kluwer Academic
  Publishers, 2003.

\bibitem{mengersen}
K.~Mengersen and R.~Tweedie.
\newblock Rates of convergence of the {H}astings and {M}etropolis algorithms.
\newblock {\em The Annals of Statistics}, 24(1):101--121, 1996.

\bibitem{metropolis}
N.~Metropolis, A.~Rosenbluth, M.~Rosenbluth, A.~Teller, and E.~Teller.
\newblock Equation of state calculations by fast computing machines.
\newblock {\em Journal of Chemical Physics}, 21(6):1087--1092, 1953.

\bibitem{tweedie}
S.~Meyn and R.~Tweedie.
\newblock {\em Markov chains and stochastic stability}.
\newblock Cambridge University Press, Cambridge, 2nd edition, 2009.

\bibitem{mira2001ordering}
A.~Mira.
\newblock {Ordering and improving the performance of Monte Carlo Markov
  chains}.
\newblock {\em Statistical Science}, pages 340--350, 2001.

\bibitem{mira2009covariance}
A.~Mira and F.~Leisen.
\newblock {Covariance ordering for discrete and continuous time Markov chains}.
\newblock {\em Statistica Sinica}, 19(2):651--666, 2009.

\bibitem{mvz2018}
J.~{Moriarty}, J.~{Vogrinc}, and A.~{Zocca}.
\newblock Frequency violations from random disturbances: an {MCMC} approach.
\newblock In {\em IEEE Conference on Decision and Control (CDC)}, pages
  1598--1603, Dec 2018.

\bibitem{neal2003}
R.~Neal.
\newblock Slice sampling.
\newblock {\em The Annals of Statistics}, 31(3):705--767, 06 2003.

\bibitem{Park2020}
J.~Park and Y.~Atchad{\'{e}}.
\newblock Markov chain monte carlo algorithms with sequential proposals.
\newblock {\em Statistics and Computing}, 30(5):1325--1345, June 2020.

\bibitem{Peskun73}
P.~Peskun.
\newblock Optimum {M}onte-carlo sampling using {M}arkov chains.
\newblock {\em Biometrika}, 60(3):607--612, 1973.

\bibitem{Pompe2020}
E.~Pompe, C.~Holmes, and K.~{\L}atuszy{\'{n}}ski.
\newblock A framework for adaptive {MCMC} targeting multimodal distributions.
\newblock {\em The Annals of Statistics}, 48(5), Oct. 2020.

\bibitem{qin2001}
Z.~Qin and J.~Liu.
\newblock {Multipoint Metropolis method with application to hybrid {M}onte
  {C}arlo}.
\newblock {\em Journal of Computational Physics}, 172(2):827--840, 2001.

\bibitem{robert2018accelerating}
C.~P. Robert, V.~Elvira, N.~Tawn, and C.~Wu.
\newblock Accelerating {MCMC} algorithms.
\newblock {\em Wiley Interdisciplinary Reviews: Computational Statistics},
  10(5):e1435, 2018.

\bibitem{RobertsRosenthal97}
G.~Roberts and J.~Rosenthal.
\newblock Geometric ergodicity and hybrid {M}arkov chains.
\newblock {\em Electronic Communications in Probability}, 2:13--25, 1997.

\bibitem{roberts}
G.~Roberts and J.~Rosenthal.
\newblock General state space {M}arkov chains and {MCMC} algorithms.
\newblock {\em Probability Surveys}, 1:20--71, 2004.

\bibitem{roberts2}
G.~Roberts and R.~Tweedie.
\newblock Geometric convergence and central limit theorems for multidimensional
  {H}astings and {M}etropolis algorithms.
\newblock {\em Biometrika}, 83(1):95--110, 1996.

\bibitem{RudolfUllrich2018}
D.~Rudolf and M.~Ullrich.
\newblock Comparison of hit-and-run, slice sampler and random walk metropolis.
\newblock {\em Journal of Applied Probability}, 55(4):1186--1202, 2018.

\bibitem{Schumer1968}
M.~Schumer and K.~Steiglitz.
\newblock Adaptive step size random search.
\newblock {\em {IEEE} Transactions on Automatic Control}, 13(3):270--276, June
  1968.

\bibitem{sminchisescu2007}
C.~Sminchisescu and M.~Welling.
\newblock {Generalized Darting Monte Carlo}.
\newblock In M.~Meila and X.~Shen, editors, {\em Proceedings of the 11th
  International Conference on Artificial Intelligence and Statistics},
  volume~2, pages 516--523. PMLR, 21--24 Mar 2007.

\bibitem{sminchisescu2003mode}
C.~Sminchisescu, M.~Welling, and G.~Hinton.
\newblock A mode-hopping {MCMC} sampler.
\newblock Technical report, CSRG-478, University of Toronto, 2003.

\bibitem{Smith84}
R.~L. Smith.
\newblock Efficient monte carlo procedures for generating points uniformly
  distributed over bounded regions.
\newblock {\em Operations Research}, 32(6):1296--1308, 1984.

\bibitem{ram2018}
H.~Tak, X.-L. Meng, and D.~van Dyk.
\newblock {A Repelling-Attracting Metropolis Algorithm for Multimodality}.
\newblock {\em Journal of Computational and Graphical Statistics}, 2018.

\bibitem{tierney}
L.~Tierney.
\newblock Markov chains for exploring posterior distributions.
\newblock {\em The Annals of Statistics}, 22(4):1701--1762, 1994.
\newblock With discussion and a rejoinder by the author.

\bibitem{Tierney1998}
L.~Tierney.
\newblock A note on {M}etropolis-{H}astings kernels for general state spaces.
\newblock {\em The Annals of Applied Probability}, 8(1):1--9, 02 1998.

\bibitem{tjelmeland2001}
H.~Tjelmeland and B.~Hegstad.
\newblock {Mode jumping proposals in MCMC}.
\newblock {\em Scandinavian Journal of Statistics}, 28(1):205--223, 2001.

\bibitem{2008data}
G.~{Vijayalakshmi Pai}.
\newblock {\em {Data Structures and Algorithms: Concepts, Techniques and
  Applications}}.
\newblock McGraw-Hill Education, 2008.

\bibitem{Wales1997}
D.~J. Wales and J.~P.~K. Doye.
\newblock {Global Optimization by Basin-Hopping and the Lowest Energy
  Structures of Lennard-Jones Clusters Containing up to 110 Atoms}.
\newblock {\em The Journal of Physical Chemistry A}, 101(28):5111--5116, 1997.

\bibitem{Zocca2021}
A.~Zocca and J.~Vogrinc.
\newblock Skipping sampler code.
\newblock \url{https://github.com/alessandrozocca/skippingsampler}, 2021.

\end{thebibliography}

\end{document}